\newtheorem{defin}{Definition}[section]
\newtheorem{theorem}[defin]{Theorem}
\newtheorem{lemma}[defin]{Lemma}
\newtheorem{prop}[defin]{Proposition}
\newtheorem{cor}[defin]{Corollary}
\newtheorem{que}[defin]{Question}
\newcommand{\fr}{Fra\"iss\'e }
\renewcommand{\phi}{\varphi}
\newcommand{\aut}[1]{\mathrm{Aut}(#1)}
\begin{document}
\title{Amenability and Unique Ergodicity of Automorphism Groups of Fra\"iss\'e Structures}
\author{Andy Zucker}
\date{December 2013}
\maketitle

\begin{abstract}
In this paper we consider those \fr classes which admit companion classes in the sense of [KPT]. We find a necessary and sufficient condition for the automorphism group of the \fr limit to be amenable and apply it to prove the non-amenability of the automorphism groups of the directed graph $\mathbf{S}(3)$ and the Boron tree structure $\mathbf{T}$. Also, we provide a negative answer to the Unique Ergodicity-Generic Point problem of Angel-Kechris-Lyons [AKL]. By considering $\mathrm{GL}(\mathbf{V}_\infty)$, where $\mathbf{V}_\infty$ is the countably infinite dimensional vector space over a finite field $F_q$, we show that the unique invariant measure on the universal minimal flow of $\mathrm{GL}(\mathbf{V}_\infty)$ is not supported on the generic orbit. \let\thefootnote\relax\footnote{2010 Mathematics Subject Classification. Primary: 37B05; Secondary: 03C15, 03E02, 03E15, 05D10, 22F10, 22F50, 43A07, 54H20.}
\let\thefootnote\relax\footnote{Key words and phrases. Fra\"iss\'e theory, Ramsey theory, universal minimal flow, amenability, unique ergodicity.}
\end{abstract}

\section{Introduction}

Let $G$ be a Hausdorff topological group. $G$ is said to be \textbf{amenable} if every jointly continuous action on a compact Hausdorff space $X$ (often called a $G$-flow) supports an invariant Borel probability measure. It is normally quite difficult to determine wheter a given topological group $G$ is amenable or not. However, there are some observations we can make to simplify our discussion. If $X$ is a $G$-flow, a \emph{subflow} $Y\subseteq X$ is a compact subspace which is invariant under $G$-action. Using Zorn's lemma, we see that each $G$-flow contains a minimal subflow. Therefore to show that $G$ is amenable, it is enough to show that every minimal $G$-flow supports an invariant Borel probability measure. We can simplify things even further; if $X$ and $Y$ are $G$-flows, we say that $\pi: X\rightarrow Y$ is a $G$-map if $\pi$ is continuous and $\pi(g\cdot x) = g\cdot \pi(x)$ for each $x\in X$. It is a fact that each topological group $G$ admits, up to isomorphism, a \textbf{universal minimal flow} $M(G)$, a flow which is minimal and such that for any other minimal $G$-flow $X$, there is a $G$-map $\pi: M(G)\rightarrow X$ (see [A] for further reading). Hence by push forward, $G$ is amenable if and only if the flow $M(G)$ supports an invariant Borel probability measure.

Kechris, Pestov, and Todorcevic in [KPT] provided a way of explicitly describing $M(G)$ in many instances where $G$ is the automorphism group of a \fr structure. We will discuss this in much more detail in section 2. In particular, having a concrete representation of $M(G)$ allows one to address questions of amenability. Kechris and Soki\'c in [KS] take advantage of this to show that the automorphism groups of the random poset and the random distributive lattice are not amenable.

Indeed, amenability is not the only property of topological groups $G$ for which is it sufficient to verify a property only for the flow $M(G)$. $G$ is said to be \textbf{uniquely ergodic} if each minimal $G$-flow supports a unique invariant Borel probability measure, and $G$ is said to have the \textbf{generic point property} if for each minimal flow $X$, there is $x\in X$ with $G\cdot x$ comeager in $X$. It is worth noting that any comeager orbit must be unique since the intersection of two comeager sets is comeager, hence nonempty. As promised, $G$ is uniquely ergodic iff $M(G)$ supports a unique invariant Borel probability measure, and $G$ has the generic point property iff there is $x\in M(G)$ with $G\cdot x$ comeager. These two definitions provide us with two notions of a canonical choice of ``large'' subset of $M(G)$, namely the support of the unique measure and the comeager orbit. In all examples of groups previously shown to be uniquely ergodic and to have the generic point property, the measure was in fact supported on the generic orbit. Angel, Kechris, and Lyons in [AKL] asked whether this phenomenon holds generally.

This paper is divided into two main parts. The first half generalizes the methods in [KS] and finds a necessary and sufficient condition for those groups whose universal minimal flows can be described using the methods of [KPT] to be amenable. Armed with this condition, we consider the ultrahomogeneous directed graph $\mathbf{S}(3)$ and the Boron tree structure $\mathbf{T}$, which are defined in sections 4 and 5, respectively. We show that:

\begin{theorem}
The groups $\aut{\mathbf{S}(3)}$ and $\aut{\mathbf{T}}$ are not amenable. 
\end{theorem}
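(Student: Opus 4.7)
The plan is to apply the combinatorial characterization of amenability that is developed in the first half of the paper. The rough shape of that criterion (generalizing \cite{KS}) is the following: if $\mathcal{K}$ is a \fr class with companion class $\mathcal{K}^*$ in the sense of [KPT] and forgetful map $\pi: \mathcal{K}^* \to \mathcal{K}$, then $\aut{\mathbf{K}}$ is amenable if and only if one can choose, for each $\mathbf{A}\in \mathcal{K}$, a probability measure $\mu_\mathbf{A}$ on $\pi^{-1}(\mathbf{A})$ in such a way that these measures are compatible under the embedding maps induced by $\mathcal{K}\hookrightarrow \mathcal{K}$. Thus to prove non-amenability, it suffices to exhibit a small finite obstruction: a structure $\mathbf{A}$ whose fiber of expansions admits only certain ``symmetry types'' of consistent measures, together with a single larger structure $\mathbf{B}\supseteq \mathbf{A}$ whose induced constraints on $\mu_\mathbf{A}$ are inconsistent.

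For $\mathbf{S}(3)$, I would first identify its companion class. The natural candidate, parallel to the case of the random tournament, is the class of finite directed graphs from $\mathrm{Age}(\mathbf{S}(3))$ equipped with a linear order on the vertices; the resulting Ramsey property would come from a Nešetřil--Rödl style argument. Fixing a finite $\mathbf{A}\in \mathrm{Age}(\mathbf{S}(3))$, a $\aut{\mathbf{A}}$-invariant probability measure on linear orderings of $\mathbf{A}$ is determined by the counts $\mu_\mathbf{A}(a<b)$ for pairs $a,b$. I would then search for a small configuration $\mathbf{B}\supseteq \mathbf{A}$, constructed using the definining three-vertex property of $\mathbf{S}(3)$, such that averaging over embeddings $\mathbf{A}\hookrightarrow \mathbf{B}$ forces a linear combination of these two-point probabilities which cannot be realized by any genuine probability distribution on orderings of $\mathbf{B}$. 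The expected source of the contradiction is a small ``cyclically biased'' substructure forcing $\mu(a<b) = \mu(b<a)$ simultaneously with a strict inequality from a different embedding.

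For the Boron tree structure $\mathbf{T}$, the companion class should be trees equipped with a convex linear ordering of the leaves (the natural orderings coming from planar embeddings). The condition for amenability again reduces to choosing, for each finite Boron tree $\mathbf{A}$, a probability measure on its convex orderings that is invariant under $\aut{\mathbf{A}}$ and compatible across embeddings. Here I would work with a small tree $\mathbf{A}$, compute the action of $\aut{\mathbf{A}}$ on the convex orderings (which is closely related to rotations at internal nodes), and then embed $\mathbf{A}$ into a slightly larger Boron tree $\mathbf{B}$ whose own symmetries push the probabilities on $\mathbf{A}$'s convex orderings into an inconsistent system.

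The main obstacle in both cases is finding the right finite witness $\mathbf{B}$: the combinatorial characterization reduces amenability to an infinite linear program over all of $\mathcal{K}$, and proving infeasibility requires isolating a finite subsystem of equations that has no nonnegative solution. In the $\mathbf{S}(3)$ case I expect the witness to be small (four or five vertices), using the defining three-vertex condition directly; in the Boron tree case the witness is likely to be more delicate because the relevant symmetries are those of binary trees rather than of vertex permutations, so one needs to exploit the interplay between convex orderings and the combinatorial rigidity of internal vertices of degree three.
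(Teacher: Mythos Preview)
Your overall strategy is the same as the paper's: use the consistency criterion (Theorem~3.1 in the paper) and exhibit a finite obstruction, i.e.\ an element of the subspace $V$ with all coefficients of one sign. That part is right, and your estimate of the size of the witnesses is accurate (four vertices for $\mathbf{S}(3)$, four leaves for $\mathbf{T}$).

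The genuine gap is that you have misidentified both companion classes. For $\mathbf{S}(3)$ the companion is \emph{not} linear orders: it is the class of structures in $\mathrm{Age}(\mathbf{S}(3))$ equipped with a partition into three unary predicates $P_0,P_1,P_2$ coming from the three arcs of length $2\pi/3$ on the circle (see [LNVT]). The analogy with the random tournament breaks down precisely here; linear orders are not the right expansion, and the entire computation you sketch (probabilities $\mu(a<b)$, etc.) targets the wrong space. In the paper, the obstruction lives in the set of $3$-colorings of a four-vertex path $w\to x\to y\to z$, and the contradiction comes from comparing two embeddings of a two-vertex non-edge into this path.

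For $\mathbf{T}$, ``convex linear orderings of the leaves'' is closer in spirit but still not the companion used: the expansion is by a single ternary relation $S$, where $S(a,b,c)$ records that, under an embedding into some $\mathbf{B}(n)$, the pair $a,b$ branches off below $c$ in the lexicographic sense. This is not the same data as a convex leaf order, and the $\aut{\mathbf{A}}$-orbits on expansions do not decompose the way your ``rotations at internal nodes'' picture suggests. The paper's obstruction is found by comparing embeddings of a $3$-leaf tree into $\mathbf{B}(2)$ and then using an automorphism of $\mathbf{B}(2)$ to cancel a residual term.

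So before you can carry out the finite search you describe, you need to replace both companion classes with the correct ones; otherwise the linear constraints you write down will simply be the wrong constraints.
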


The second half answers in the negative the question of Angel, Kechris, and Lyons by exhibiting the following counterexample; fix a finite field $F_q$, and let $\mathbf{V}_\infty$ be the countably infinite vector space over $F_q$. In section 6, we prove:
\begin{theorem}
The unique invariant measure on $M(\aut{\mathbf{V}_\infty})$ is not supported on the generic orbit. 
\end{theorem}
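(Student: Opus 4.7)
The plan has four steps.

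First, using the Graham--Leeb--Rothschild Ramsey theorem together with the [KPT] framework, identify $M(\mathrm{GL}(\mathbf{V}_\infty))$ with the compact space $X$ of \emph{natural} linear orderings of $\mathbf{V}_\infty$ --- those whose restriction to every finite dimensional subspace $W$ is the antilex order with respect to some ordered basis of $W$, relative to a fixed linear order on $F_q$. The space $X$ is a closed $\mathrm{GL}(\mathbf{V}_\infty)$-invariant subset of $2^{\mathbf{V}_\infty \times \mathbf{V}_\infty}$ with the inherited topology.

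Second, exhibit the unique invariant probability measure. For each finite dimensional $W \subseteq \mathbf{V}_\infty$, $\mathrm{GL}(W)$ acts transitively on the finitely many natural orderings of $W$, so the uniform probability measure is the unique invariant one at each stage; these measures are consistent under restriction and combine by Kolmogorov extension into a unique $\mathrm{GL}(\mathbf{V}_\infty)$-invariant Borel probability measure $\mu$ on $X$. By the expansion property of the natural-order companion class over the class of finite dimensional vector spaces, there is also a unique comeager $\mathrm{GL}(\mathbf{V}_\infty)$-orbit $O \subseteq X$, namely the generic orbit.

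Third, and at the heart of the argument, show that $\mu(O) = 0$. The plan is to build a Borel $\mathrm{GL}(\mathbf{V}_\infty)$-equivariant map $\phi : O \to Y$, where $Y$ is a countably infinite $\mathrm{GL}(\mathbf{V}_\infty)$-set with a transitive action (e.g.\ $Y = \mathbf{V}_\infty \setminus \{0\}$, or $Y$ the set of nonzero $1$-dimensional subspaces, or some other natural countable transitive $\mathrm{GL}$-set). Here $\phi(\prec)$ is to be a canonical element of $Y$ singled out from each generic ordering by the additional rigid combinatorial structure carried by the Fra\"iss\'e generic natural ordering (for instance a canonically distinguished nonzero vector or line extracted from its ultrahomogeneous structure).

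Fourth, conclude: the fibers $\{\phi^{-1}(y) : y \in Y\}$ partition $O$. By $\mathrm{GL}(\mathbf{V}_\infty)$-invariance of $\mu$ together with the transitivity of the $\mathrm{GL}(\mathbf{V}_\infty)$-action on $Y$, all these fibers have a common measure $c$. Since $Y$ is countably infinite and $\mu(O) \leq 1$, we must have $c = 0$ and hence $\mu(O) = 0$.

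The main obstacle is the construction of $\phi$: one must isolate, in a $\mathrm{GL}(\mathbf{V}_\infty)$-equivariant way, a canonical countable-valued invariant of each generic natural ordering, and this requires a careful analysis of the stabilizer $\mathrm{Aut}(\mathbf{V}_\infty, \prec)$ of a Fra\"iss\'e generic $\prec \in O$ in order to ensure that some element of $Y$ is fixed by it. Once $\phi$ is in place, countable additivity and transitivity close the argument immediately.
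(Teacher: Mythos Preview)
Your proposal has a genuine gap, and you identify it yourself: the entire weight of the argument rests on constructing the equivariant map $\phi:O\to Y$, and you have not constructed it. Worse, the natural candidates you list cannot work. If $Y$ is the set of $1$-dimensional subspaces (or nonzero vectors), then for $\phi$ to be well-defined you need the stabilizer $\mathrm{Aut}(\mathbf{V}_\infty,\prec)$ of a generic $\prec$ to fix some line. But $\langle\mathbf{V}_\infty,\prec\rangle\cong\mathbf{V}_\infty^*$ is ultrahomogeneous, and any two $1$-dimensional subspaces with the restricted order are isomorphic (send the line-minimal element to the line-minimal element); hence $\mathrm{Aut}(\mathbf{V}_\infty^*)$ acts transitively on lines and fixes none. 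More generally, to carry out your plan you would need a proper open subgroup $H\lneq\mathrm{GL}(\mathbf{V}_\infty)$ containing $\mathrm{Aut}(\mathbf{V}_\infty^*)$, and you give no reason to believe one exists. So steps three and four, as written, are a wish rather than a proof.

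The paper's argument is entirely different and avoids this obstacle by a direct measure computation. Fix a single line $V_1\subset\mathbf{V}_\infty$. For each natural ordering $<$, the relation $u\sim v$ (meaning $u$ and $v$ have the same leading basis coordinate in any finite-dimensional subspace containing them) is an equivalence relation, and one asks how many $\sim$-classes lie above the class $[V_1,<]$. The set $N_{V_1}^k$ of orderings for which at most $k$ classes lie above $[V_1,<]$ is shown to have $\mu(N_{V_1}^k)=1-q^{-(k+1)}$ by an explicit count of ordered bases on finite-dimensional subspaces, so $\mu\bigl(\bigcup_k N_{V_1}^k\bigr)=1$. On the other hand, any $<$ in this union fails the extension property (one cannot embed a $(k+2)$-dimensional space with $V_1$ mapped to the bottom basis vector), so no such $<$ lies in the generic orbit. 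Thus the generic orbit is $\mu$-null. No equivariant selection is needed; the argument is a concrete Borel--Cantelli style calculation.
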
 
Of course, after asserting where the unique measure in this example is \emph{not} supported, it is natural to ask where it is supported. This is the focus of sections 7 and 8. 

\subsection*{Acknowledgements}
I would like to thank Alekos Kechris, Miodrag Soki\'c, and Anush Tserunyan for many helpful discussions and comments, as well as Ola Kwiatkowska and Robin Tucker-Drob for patiently lending an ear. I would also like to thank an anonymous referee for many helpful comments and suggestions. This research was supported by the Caltech SURF Office and the Logic in Southern California NSF grant EMS W21-RTG.

\section{Background}
This section describes the necessary model theoretic background as well as a brief summary of the material from [KPT].

A \emph{language} $L = \{R_i\}_{i\in I}\cup \{f_j\}_{j\in J}\cup \{c_k\}_{k\in K}$ is a set of relation, function, and constant symbols. Each relation and function symbol has an arity $n(i), m(j)\in \mathbb{N}^+$ for $i\in I, j\in J$. In this paper, all languages are assumed to be countable. An \emph{$L$-structure} $\mathbf{A} = \langle A, \{R^{\mathbf{A}}_i\}_{i\in I}, \{f^{\mathbf{A}}_j\}_{j\in J}, \{c^{\mathbf{A}}\}_{k\in K}\rangle$ consists of a set $A\neq\emptyset$ called the \emph{universe} of $\mathbf{A}$, $R^{\mathbf{A}}_i\subseteq A^{n(i)}$, $f^{\mathbf{A}}_j: A^{m(j)}\rightarrow A$, and $c^{\mathbf{A}}_k\in A$. An \emph{embedding/isomorphism} $\pi: \mathbf{A}\rightarrow \mathbf{B}$ of $L$-structures is an injective/bijective map which preserves structure: $R^{\mathbf{A}}_i(a_1,...,a_{n(i)})\Leftrightarrow R^{\mathbf{B}}_i(\pi(a_1),...,\pi(a_{n(i)}))$, $\pi(f^{\mathbf{A}}_j(a_1,...,a_{m(j)})) = f^{\mathbf{B}}_j(\pi(a_1),...,\pi(a_{m(j)}))$, and $\pi(c^{\mathbf{A}}_k) = c^{\mathbf{B}}_k$. We write $\mathbf{A}\leq \mathbf{B}$ if there is an embedding from $\mathbf{A}$ into $\mathbf{B}$, and we write $\mathbf{A}\cong \mathbf{B}$ if there is an isomorphism between the two. We say $\mathbf{A}$ is a \emph{substructure} of $\mathbf{B}$, written $\mathbf{A}\subseteq \mathbf{B}$, if $A\subseteq B$ and $A$ is closed under the functions $f_j^{\mathbf{B}}$. If $L_0\subseteq L$ and $\mathbf{A}$ is an $L$-structure, we write $\mathbf{A}|_{L_0}$ for the structure obtained by taking $\mathbf{A}$ and ignoring the interpretations of relation, function, and constant symbols in $L\backslash L_0$. Similarly, if $\mathcal{K}$ is a class of $L$-structures, we write $\mathcal{K}|_{L_0}$ for the class $\{\mathbf{A}|_{L_0}: \mathbf{A}\in \mathcal{K}\}$.

Let $\mathcal{K}$ be a class of finite $L$-structures closed under isomorphism with countably many isomorphism types and such that there are structures in $\mathcal{K}$ of arbitrarily large finite cardinality. We call $\mathcal{K}$ a \textbf{Fra\"iss\'e class} if the following three items hold:
\begin{itemize}
\item
Hereditary Property (HP): If $\mathbf{B}\in \mathcal{K}$ and $\mathbf{A}\leq \mathbf{B}$, then $\mathbf{A}\in \mathcal{K}$.
\item
Joint Embedding Property (JEP): If $\mathbf{A}, \mathbf{B}\in \mathcal{K}$, then there is $\mathbf{C}\in \mathcal{K}$ such that $\mathbf{A}, \mathbf{B}\leq \mathbf{C}$.
\item
Amalgamation Property (AP): If $\mathbf{A}, \mathbf{B}, \mathbf{C}\in \mathcal{K}$ and $f: \mathbf{A}\rightarrow \mathbf{B}$, $g: \mathbf{A}\rightarrow \mathbf{C}$ are embeddings, then there is $\mathbf{D}\in \mathcal{K}$ and embeddings $r: \mathbf{B}\rightarrow \mathbf{D}$, $s: \mathbf{C}\rightarrow \mathbf{D}$ such that $r\circ f = s\circ g$.
\end{itemize}
A \textbf{Fra\"iss\'e structure} is a countably infinite, locally finite (finitely generated substructures are finite) $L$-structure $\mathbf{K}$ which is \emph{ultrahomogeneous}, i.e.\ any isomorphism between finite substructures extends to an automorphism of $\mathbf{K}$. An equivalent definition is that a Fra\"iss\'e structure satisfies the \emph{Extension Property}: for any $\mathbf{A}\subseteq \mathbf{B}$, $\mathbf{A}, \mathbf{B}\in\mathcal{K}$ and any embedding $f: \mathbf{A}\rightarrow \mathbf{K}$, there is an embedding $g: \mathbf{B}\rightarrow \mathbf{K}$ extending $f$. For any infinite structure $\mathbf{X}$, let $\mathrm{Age}(\mathbf{X})$ denote the class of finite substructures which embed into $\mathbf{X}$. Fra\"iss\'e's theorem states that there is a one-to-one correspondence between Fra\"iss\'e classes and Fra\"iss\'e structures: the age of each Fra\"iss\'e structure is a Fra\"iss\'e class, and each Fra\"iss\'e class is the age of a Fra\"iss\'e structure unique up to isomorphism. For $\mathcal{K}$ such a class, we write $\mathrm{Flim}(\mathcal{K})$ for the associated structure, called the \textbf{Fra\"iss\'e limit} of $\mathcal{K}$. See Hodges [Ho] for a more detailed exposition.

For finite structures $\mathbf{A}\leq \mathbf{B}$, let $\binom{\mathbf{B}}{\mathbf{A}}$ denote those substructures of $\mathbf{B}$ which are isomorphic to $\mathbf{A}$. A class $\mathcal{K}$ of finite structures satisfies the \emph{Ramsey Property} (RP) if for any $\mathbf{A}\leq \mathbf{B}\in \mathcal{K}$ and any $k\geq 2$, there is $\mathbf{C}\in \mathcal{K}$, $\mathbf{B}\leq \mathbf{C}$ such that for any coloring $c: \binom{\mathbf{C}}{\mathbf{A}}\rightarrow k$, there is $\mathbf{B}_0\in \binom{\mathbf{C}}{\mathbf{B}}$ for which $c$ is constant on $\binom{\mathbf{B}_0}{\mathbf{A}}$. In [KPT], it is shown that for a Fra\"iss\'e structure $\mathbf{K}$, $\mathrm{Age}(\mathbf{K})$ has the RP and consists of rigid structures (having no non-trivial automorphisms) if and only if $M(\mathrm{Aut}(\mathbf{K}))$ consists of a single point; such groups are said to be \emph{extremely amenable}. It should be noted that we regard $\aut{\mathbf{K}}$ as a topological group with the pointwise convergence topology; a neighborhood basis at the identity is given by subgroups of the form $U_{\bar{a}} := \{g\in \aut{\mathbf{K}}: g(a_i) = a_i \text{ for each } a_i\in \bar{a}\}$ for finite $\bar{a}\subset \mathbf{K}$.

Let $\mathcal{K}$ and $\mathcal{K}^*$ be Fra\"iss\'e classes with limits $\mathbf{K}, \mathbf{K}^*$ in languages $L$ and $L^* = L\cup \{R_1,...,R_n\}$, with each $R_i$ a relation not in $L$, such that $\mathcal{K}^*|_L = \mathcal{K}$. We say the pair $(\mathcal{K}, \mathcal{K}^*)$ is \emph{reasonable} if for any $\mathbf{A}^*\in \mathcal{K}^*$, $\mathbf{B}\in \mathcal{K}$, and embedding $f: \mathbf{A}^*|_L\rightarrow \mathbf{B}$, there is $\mathbf{B}^*\in\mathcal{K}^*$ with $\mathbf{B}^*|_L = \mathbf{B}$ and $f: \mathbf{A}^* \rightarrow \mathbf{B}^*$ also an embedding. For $(\mathcal{K}, \mathcal{K}^*)$ reasonable, we have $\mathbf{K}^*|_L \cong \mathbf{K}$. In this case, $\mathrm{Aut}(\mathbf{K})$ acts on the compact, metrizable space $X_{\mathcal{K}^*}$ of all relations $(R_1,...,R_n)$ on $\mathbf{K}$ with $\mathrm{Age}(\langle \mathbf{K}, R_1,...,R_n\rangle) \subseteq \mathcal{K}^*$. The basic neighborhoods of $X_{\mathcal{K}^*}$ are given by open sets of the form $N_{\langle \mathbf{A}, S_1,...,S_n \rangle}:= \{(R_1,...,R_n)\in X_{\mathcal{K}^*}: R_i|_{\mathbf{A}} = S_i, 1\leq i\leq n\}$ for $\mathbf{A}\subseteq \mathbf{K}$, $\mathbf{A}\in \mathcal{K}$, and $\langle \mathbf{A}, S_1,...,S_n\rangle\in \mathcal{K}^*$. 

The pair satisfies the \emph{Expansion Property} if for each $\mathbf{A}\in \mathcal{K}$, there is $\mathbf{B}\in \mathcal{K}$ such that for any $\mathbf{A}^*, \mathbf{B}^*\in \mathcal{K}$ with $\mathbf{A}^*|_L= \mathbf{A}$, $\mathbf{B}^*|_L = \mathbf{B}$, we have $\mathbf{A}^* \leq \mathbf{B}^*$. If $(\mathcal{K},\mathcal{K}^*)$ is a reasonable pair, then the Expansion Property is equivalent to $X_{\mathcal{K}^*}$ being a minimal flow. Pairs of \fr classes which are reasonable, satisfy the Expansion Property, and with $\mathcal{K}^*$ satisfying the RP are called \emph{excellent}. In this case we call $\mathcal{K}^*$ a \textbf{companion} of $\mathcal{K}$. In [KPT], and later in the generality presented here in work of Nguyen Van Th\'e [LNVT], it is shown that given an excellent pair, $X_{\mathcal{K}^*}$ is the universal minimal flow of $\mathrm{Aut}(\mathbf{K})$. 

Let us show that for such $\mathbf{K}$, $\mathrm{Aut}(\mathbf{K})$ has the generic point property as witnessed by any point $(S_1,...,S_n)\in X_{\mathcal{K}^*}$ with $\langle \mathbf{K}, S_1,...,S_n\rangle \cong \mathbf{K}^*$. Notice that $\aut{\mathbf{K}}\cdot (S_1,...,S_n) = \{(R_1,...,R_n)\in X_{\mathcal{K}^*}: \langle \mathbf{K}, R_1,...,R_n\rangle \cong \mathbf{K}^*\}$; we need to show that this is $G_\delta$ (since $X_{\mathcal{K}^*}$ is minimal, we know that the orbit is dense). So fix $\mathbf{A}^*\subseteq \mathbf{B}^*\in \mathcal{K}^*$, with $\mathbf{A}^*|_L = \mathbf{A}$, $\mathbf{B}^*|_L = \mathbf{B}$ and let $f: \mathbf{A}\rightarrow \mathbf{K}$ be an embedding. Let $N_{f, \mathbf{A}^*\subseteq \mathbf{B}^*}$ denote those expansions $(R_1,...,R_n)\in X_{\mathcal{K}^*}$ for which $f$ embeds $\mathbf{A}^*$ into $\langle\mathbf{K}, R_1,...,R_n\rangle$ and can be extended to an embedding $g: \mathbf{B}^*\rightarrow \langle \mathbf{K}, R_1,...,R_n\rangle$. $N_{f, \mathbf{A}^*\subseteq \mathbf{B}^*}$ is open and represents a single instance of the Extension Property. Recall that structures isomorphic to $\mathbf{K}^*$ are exactly those which satisfy all instances of Extension Property; as there are countably many triples $f, \mathbf{A}^*, \mathbf{B}^*$, we see that $\aut{\mathbf{K}}\cdot (S_1,...,S_n)$ is $G_\delta$.

\section{Amenability}

Let $(\mathcal{K}, \mathcal{K}^*)$ be an excellent pair with limits $\mathbf{K}, \mathbf{K}^*$. Define $\mathrm{Fin}(\mathbf{K})$ to be the set of all finite substructures of $\mathbf{K}$. Compare this notion to $\mathrm{Age}(\mathbf{K}) = \mathcal{K}$; the latter consists of all structures isomorphic to structures in $\mathrm{Fin}(\mathbf{K})$. For each $\mathbf{A}\in \mathrm{Fin}(\mathbf{K})$, let
\begin{align*}
\mathcal{K}^*(\mathbf{A}) :=\{&\langle \mathbf{A},(S_1,...,S_n)\rangle\in \mathcal{K}^*\}
\end{align*}
Let $\mathbf{A}, \mathbf{B}\in \mathrm{Fin}(\mathbf{K})$, and let $\pi: \mathbf{A}\rightarrow \mathbf{B}$ be an embedding. For each $\mathbf{A}^* \in \mathcal{K}^*(\mathbf{A})$, define:
\begin{align*}
\mathcal{K}^*(\mathbf{A}^*, \mathbf{B}, \pi) := \{&\langle \mathbf{B}, (T_1,...,T_n)\rangle \in \mathcal{K}^*:\\ &\pi: \langle\mathbf{A}, S_1,...,S_n\rangle \rightarrow \langle\mathbf{B}, T_1,...,T_n\rangle \text{ is an embedding}\}.
\end{align*}

Let $\Omega = \!\!\!\displaystyle\bigcup_{\mathbf{A}\in \mathrm{Fin}(\mathbf{K})} \!\!\!\mathcal{K}^*(\mathbf{A})$. Though the elements of $\Omega$ are structures, for neatness we will often denote elements of $\Omega$ with variable names, such as $x$, $y$, etc. Form the vector space $\mathbb{Q}\Omega$, the vector space over $\mathbb{Q}$ with basis $\Omega$. Let $S$ be the set of all elements of the form
$$x - \left(\sum_{y \in \mathcal{K}^*(x, \mathbf{B}, \pi)}\!\!\!\!\!{y}\right)$$
for some $\mathbf{A}, \mathbf{B}\in \mathrm{Fin}(\mathbf{K})$, $x \in \mathcal{K}^*(\mathbf{A})$, and $\pi\!:\mathbf{A}\rightarrow \mathbf{B}$ an embedding. Let $V$ be the subspace generated by $S$. We are now able to state the first main theorem.

\begin{theorem}
$\mathrm{Aut}(\mathbf{K})$ is amenable if and only if for all nonzero $v\in V$, $v = \displaystyle\sum_{x \in \Omega}{c_x x}$, there are $x, y \in \Omega$ with $c_x > 0$, $c_y < 0$.
\end{theorem}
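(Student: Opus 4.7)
The strategy is to set up a dictionary between $\aut{\mathbf{K}}$-invariant Borel probability measures on $X_{\mathcal{K}^*}$ and nonnegative normalized functions $\tilde\mu:\Omega\to[0,1]$ that vanish on $V$. Given an invariant $\mu$, set $\tilde\mu(x):=\mu(N_x)$: finite additivity of $\mu$ over the clopen partitions $N_x=\bigsqcup_{y\in\mathcal{K}^*(x,\mathbf{B},\mathrm{incl})}N_y$ encodes the generators of $V$ with $\pi$ an inclusion, while $\aut{\mathbf{K}}$-invariance combined with the ultrahomogeneity of $\mathbf{K}$ (so that any embedding $\pi:\mathbf{A}\to\mathbf{A}'$ between finite substructures extends to an element of $\aut{\mathbf{K}}$) encodes the remaining generators $x-\pi_*x$. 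Conversely, any such $\tilde\mu$ normalized by $\sum_{x\in\mathcal{K}^*(\mathbf{A})}\tilde\mu(x)=1$ (a condition which, by the cover relations, is independent of the choice of $\mathbf{A}$) defines a finitely additive invariant probability on the clopen algebra of $X_{\mathcal{K}^*}$ that extends uniquely to a Borel probability measure by compactness of the Stone space $X_{\mathcal{K}^*}$.

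For the $(\Rightarrow)$ direction, amenability applied to $X_{\mathcal{K}^*}=M(\aut{\mathbf{K}})$ produces an invariant $\mu$, hence $\tilde\mu$ as above. The key observation is that $\tilde\mu>0$ strictly on all of $\Omega$: by minimality the $\aut{\mathbf{K}}$-translates of any basic clopen $N_x$ cover the compact space $X_{\mathcal{K}^*}$, so compactness yields a finite subcover $X_{\mathcal{K}^*}=g_1N_x\cup\cdots\cup g_n N_x$, whence $\tilde\mu(x)\geq 1/n$. Then for any $v=\sum c_x x\in V$ with all $c_x\geq 0$, the identity $0=\sum c_x\tilde\mu(x)$ combined with strict positivity of $\tilde\mu$ forces every $c_x=0$, so $v=0$; the all-nonpositive case is symmetric.

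For $(\Leftarrow)$ I plan a compactness argument. Fix $\mathbf{A}_0\in\mathrm{Fin}(\mathbf{K})$ and for each $\mathbf{B}\in\mathrm{Fin}(\mathbf{K})$ containing $\mathbf{A}_0$ let $\Omega_{\mathbf{B}}=\bigcup_{\mathbf{A}\subseteq\mathbf{B}}\mathcal{K}^*(\mathbf{A})$ (finite) and let $V_{\mathbf{B}}\subseteq\mathbb{Q}^{\Omega_{\mathbf{B}}}$ be the finite-dimensional subspace spanned by generators of $V$ with all terms in $\Omega_{\mathbf{B}}$. Since $V_{\mathbf{B}}\subseteq V$, the hypothesis yields $V_{\mathbf{B}}\cap\mathbb{R}^{\Omega_{\mathbf{B}}}_{\geq 0}=\{0\}$; as $V_{\mathbf{B}}$ is then disjoint from the nonempty open orthant $\mathbb{R}^{\Omega_{\mathbf{B}}}_{>0}$, the finite-dimensional Hahn-Banach separation (Gordan's theorem) produces a nonzero $\mu_{\mathbf{B}}\in\mathbb{R}^{\Omega_{\mathbf{B}}}_{\geq 0}$ with $\mu_{\mathbf{B}}|_{V_{\mathbf{B}}}=0$. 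The cover relation from $\mathbf{A}_0$ into $\mathbf{B}$ lies in $V_{\mathbf{B}}$, so $\sum_{\mathcal{K}^*(\mathbf{A}_0)}\mu_{\mathbf{B}}=\sum_{\mathcal{K}^*(\mathbf{B})}\mu_{\mathbf{B}}>0$, and I normalize so this common value equals $1$; the cover relations from every $\mathbf{A}\subseteq\mathbf{B}$ into $\mathbf{B}$ then force $0\leq\mu_{\mathbf{B}}(x)\leq 1$ for each $x\in\Omega_{\mathbf{B}}$. Extending by zero places $\mu_{\mathbf{B}}\in[0,1]^{\Omega}$, and by Tychonoff some subnet converges to $\tilde\mu\in[0,1]^{\Omega}$; since each generator of $V$ eventually lies in $V_{\mathbf{B}}$, $\tilde\mu$ vanishes on all of $V$, and the finite sum $\sum_{\mathcal{K}^*(\mathbf{A}_0)}\tilde\mu(x)=1$ survives in the limit by continuity. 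The dictionary returns an invariant probability measure on $X_{\mathcal{K}^*}$, proving $\aut{\mathbf{K}}$ amenable.

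The main obstacle is the $(\Leftarrow)$ compactness step: one has to organize the directed family of approximations so that the normalized $\mu_{\mathbf{B}}$ are uniformly bounded (secured by always having the cover relation into the ambient $\mathbf{B}$ available) and so that every single generator of $V$ is eventually certified to vanish. The conceptual point making this work is that the single algebraic condition ``$\tilde\mu$ vanishes on $V$'' simultaneously packages both finite additivity and $\aut{\mathbf{K}}$-invariance, which is what makes Gordan's theorem directly applicable at each finite stage; the Caratheodory extension of a finitely additive clopen probability to a Borel measure on the Stone space $X_{\mathcal{K}^*}$ is then automatic.
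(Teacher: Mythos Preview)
Your proof is correct and follows the same two-step strategy as the paper: the $(\Rightarrow)$ direction is identical (minimality forces $\mu(N_x)>0$, so a nonnegative nonzero combination in $V$ is impossible), and for $(\Leftarrow)$ both arguments invoke a finite-dimensional theorem of the alternative at each finite stage and then pass to a limit. The organizational differences are that the paper applies Stiemke's theorem to variables indexed only by $\mathcal{K}^*(\mathbf{B})$, obtaining a \emph{consistent} strictly positive probability there, and then takes an ultrafilter limit along an exhausting chain $\mathbf{A}_1\subset\mathbf{A}_2\subset\cdots$; you instead apply Gordan/separation on the larger index set $\Omega_{\mathbf{B}}=\bigcup_{\mathbf{A}\subseteq\mathbf{B}}\mathcal{K}^*(\mathbf{A})$ (packaging both finite additivity and invariance together as vanishing on $V_{\mathbf{B}}$) and take a Tychonoff subnet limit over the directed family of all finite $\mathbf{B}\supseteq\mathbf{A}_0$. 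Your dictionary framing --- decomposing a general generator as $(x-\pi_*x)+(\pi_*x-\sum_y y)$ so that ultrahomogeneity handles the first piece and the clopen partition $N_{\pi_*x}=\bigsqcup N_y$ handles the second --- is a clean way to see why the single condition $\tilde\mu|_V=0$ captures everything, but the underlying mechanism is the same as the paper's.
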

It should be noted that only the forward direction will be needed in sections 4 and 5.
\begin{proof}
$(\Rightarrow)$ Suppose $\mu$ is an invariant measure on $X_{\mathcal{K}^*}$. Recall that the topology on $X_{\mathcal{K}^*}$ is given by basic neighborhoods of the form $N_x = \{(S_1,...,S_n)^* \in X_{\mathcal{K}^*}: (S_1,...,S_n)^*|_{\mathbf{A}} = (S_1,...,S_n)\}$ for $x = \langle \mathbf{A}, (S_1,...,S_n)\rangle$. Define $\mu_{\Omega}: \mathbb{R}\Omega\rightarrow \mathbb{R}$ by 
$$\mu_{\Omega}\left(\displaystyle\sum_{x\in \Omega}{c_x x}\right) = \displaystyle\sum_{x\in \Omega}{c_x\mu (N_x)}.$$ 
We see that for $v\in V$, we must have $\mu_{\Omega}(v) = 0$. As $X_{\mathcal{K}^*}$ is minimal, $\mu_{\Omega}(x) = \mu (N_x) > 0$ for all $x\in \Omega$. If $0\neq v\in V$, $v = \sum_{x\in \Omega}{c_x x}$ and without loss of generality $c_x \geq 0$ for all $x\in \Omega$, then for some $x$ we have $c_x > 0$. But then $\mu_{\Omega}(v) > 0$, a contradiction.

$(\Leftarrow)$ Suppose the conditions of Theorem 3.1 hold. As noted by Kechris (private communication), it is sufficient to show that for $\mathbf{B}\in \mathrm{Fin}(\mathbf{K})$, there exists a consistent probability measure $\mu_\mathbf{B}$ on $\mathcal{K}^*(\mathbf{B})$, i.e.\ a probability measure such that for $\mathbf{A}\in \mathrm{Fin}(\mathbf{K})$, $x\in \mathcal{K}^*(\mathbf{A})$, and any two embeddings $\pi_1, \pi_2: \mathbf{A}\rightarrow \mathbf{B}$, we have $\mu_\mathbf{B}(\mathcal{K}^*(x, \mathbf{B}, \pi_1)) = \mu_\mathbf{B}(\mathcal{K}^*(x, \mathbf{B}, \pi_2))$. Indeed, if this is the case, let $\mathbf{A}_1\subset \mathbf{A}_2\subset...\subset \mathbf{K}$ be finite substructures with $\bigcup_{n=1}^{\infty} \mathbf{A}_n = \mathbf{K}$, and for each $n$ let $\mu_n$ be a consistent probability measure on $\mathcal{K}^*(\mathbf{A}_n)$. We will create an invariant measure on $X_{\mathcal{K}^*}$ as follows: let $\mathbf{A}\in \mathrm{Fin}(\mathbf{K})$ and $x\in \mathcal{K}^*(\mathbf{A})$; note that $\mathbf{A}\subset \mathbf{A}_n$ for large enough $n$. Set $\mu (N_x) = \displaystyle\lim_{n\to \mathcal{U}} \mu_n(\mathcal{K}^*(x, \mathbf{A}_n, i_{\mathbf{A}}))$, where $\mathcal{U}$ is a non-principal ultrafilter on $\mathbb{N}$ and $i_{\mathbf{A}}$ is the inclusion embedding. Extend to $X_{\mathcal{K}^*}$ by additivity. To see that this is invariant, let $g\in \aut{\mathbf{K}}$; find $\mathbf{B}\in \mathrm{Fin}(\mathbf{K})$ and $\pi:\; \mathbf{A}\rightarrow \mathbf{B}$ an embedding such that $\pi = g|_{\mathbf{A}}$. Let $n$ be sufficiently large so that $\mathbf{A}, \mathbf{B}\subset \mathbf{A}_n$. Now we have: 

\begin{align*}
\mu_n(\mathcal{K}^*(x, \mathbf{A}_n, i_{\mathbf{A}}))
 &= \mu_n(\mathcal{K}^*(x, \mathbf{A}_n, i_{\mathbf{B}}\circ\pi))\\
&= \mu_n\left(\displaystyle\bigsqcup_{y \in \mathcal{K}^*(x, \mathbf{B}, \pi)} \!\!\!\!\!\mathcal{K}^*(y, \mathbf{A}_n, i_\mathbf{B})\right)\\\\
\Rightarrow \mu(N_x) &= \lim_{n\to \mathcal{U}} \mu_n(\mathcal{K}^*(x, \mathbf{A}_n, i_{\mathbf{A}})\\ 
&= \lim_{n\to\mathcal{U}} \!\sum_{y\in \mathcal{K}^*(x,\mathbf{B},\pi)} \!\!\!\mu_n(\mathcal{K}^*(y, \mathbf{A}_n, i_{\mathbf{B}})\\
&= \sum_{y\in \mathcal{K}^*(x, \mathbf{B}, \pi)} \mu(N_y)\\
&= \mu(g(N_x)).
\end{align*}  

Let $S_\mathbf{B}\subset V$ consist of all elements of $\mathbb{R}\Omega$ of the form
\vskip 2 mm
\begin{align*}
\left(\sum_{y\in \mathcal{K}^*(x, \mathbf{B}, \pi_1)}\!\!\!\!\!{y}\right)- \left(\sum_{z\in \mathcal{K}^*(x, \mathbf{B}, \pi_2)}\!\!\!\!\!{z}\right) = \left(x-\left(\sum_{z\in \mathcal{K}^*(x, \mathbf{B}, \pi_2)}\!\!\!\!\!{z}\right)\right) - \left(x-\left(\sum_{y\in \mathcal{K}^*(x, \mathbf{B}, \pi_1)}\!\!\!\!\!{y}\right)\right)\\
\end{align*}
for some $\mathbf{A}\in \mathrm{Fin}(\mathbf{K})$, $x\in \mathcal{K}^*(\mathbf{A})$, and embeddings $\pi_1,\pi_2:\; \mathbf{A}\rightarrow \mathbf{B}$. Consider the following system of inequalities and equalities in real variables $q_x$, $x\in \mathcal{K}^*(\mathbf{B})$, where for $v = \displaystyle\sum_{x\in \Omega} c_x x\in \mathbb{R}\Omega$, we let $q_v = \displaystyle\sum_{x\in \Omega} c_x q_x$.
\vskip -1 mm
\begin{alignat}{2}
q_s &= 0 &\quad 
&(s\in S_\mathbf{B}),\\
q_x &> 0 &&(x\in \mathcal{K}^*(\mathbf{B})),\\
\medskip
\sum_{x\in \mathcal{K}^*(\mathbf{B})}{q_x} &= 1. & &
\end{alignat}

If this system has a solution, the solution is the consistent probability measure we seek, with $\mu_{\mathbf{B}}(x) = q_x$. Note that the system (1), (2), (3) has a solution when the system (1), (2) has a solution. Now we can use the rational form of Stiemke's theorem ([S], see also Border [B]), which states that for a rational-valued matrix $A$, the equation $A\mathbf{x} = 0$ has a rational solution with $\mathbf{x}>0$ {\rm (}each entry $x\in \mathbf{x}$ has $x>0${\rm )}, or the equation $\mathbf{y}^TA \gneq 0$ has a rational solution {\rm (}each entry $z\in \mathbf{y}^TA$ has $z\geq 0$, and at least one entry has $z>0${\rm )}.

Let $A$ be the matrix of coefficients from (1). A solution to the system (1), (2) is a vector $\mathbf{x}$ whose entries are exactly the $q_x$. Hence, if there is no solution to $A\mathbf{x}>0$, let $\mathbf{y} = \{y_s: s\in S_\mathbf{B}\}$ be a solution to $\mathbf{y}^TA\gneq 0$. Consider the element $v = \displaystyle\sum_{s\in S_{\mathbf{B}}} y_s s \in V$. Write $v = \displaystyle\sum_{x\in \mathcal{K}^*(\mathbf{B})} c_x x$. Stiemke's theorem tells us that $c_x \geq 0$ for all $x\in \mathcal{K}^*(\mathbf{B})$ and that for some $x$ we have $c_x > 0$. This contradicts the assumptions of Theorem 3.1. 
\end{proof}

\section{Application: $\mathrm{Aut}(\mathbf{S}(3))$ is not amenable}
The directed graph $\mathbf{S}(3)$ has vertex set $\{e^{iq}: q\in \mathbb{Q}\}$; for vertices $a = e^{iq_1}$, $b = e^{iq_2}$, $a\rightarrow b$ when there is $m\in \mathbb{Z}$ with $q_2 - q_1 \in (2\pi m, 2\pi m + 2\pi/3)$, i.e.\ $b$ is less than $2\pi /3$ radians counterclockwise from $a$. We will write $(a, b)$ for those points lying in the interval counterclockwise from $a$ to $b$.  Let $\mathcal{K} = \mathrm{Age}(\mathbf{S}(3))$. The appropriate companion class with the Expansion and Ramsey properties is $\mathcal{K}^* = \mathrm{Age}(\mathbf{S}(3)^*)$, where $\mathbf{S}(3)^* = \langle \mathbf{S}(3), P_0, P_1, P_2 \rangle$, and the $P_i$ are unary relations with $P_i(a)$ when $a$ is between $2\pi i/3$ and $2\pi (i+1)/3$ radians counterclockwise from the top of the circle (see [LNVT]). For $a\in \mathbf{A}^*\in \mathcal{K}^*$, we will write $P(a) = i$ when $P_i(a)$ holds. 

\begin{center}
\includegraphics*[viewport=0 250 500 700, scale=0.3]{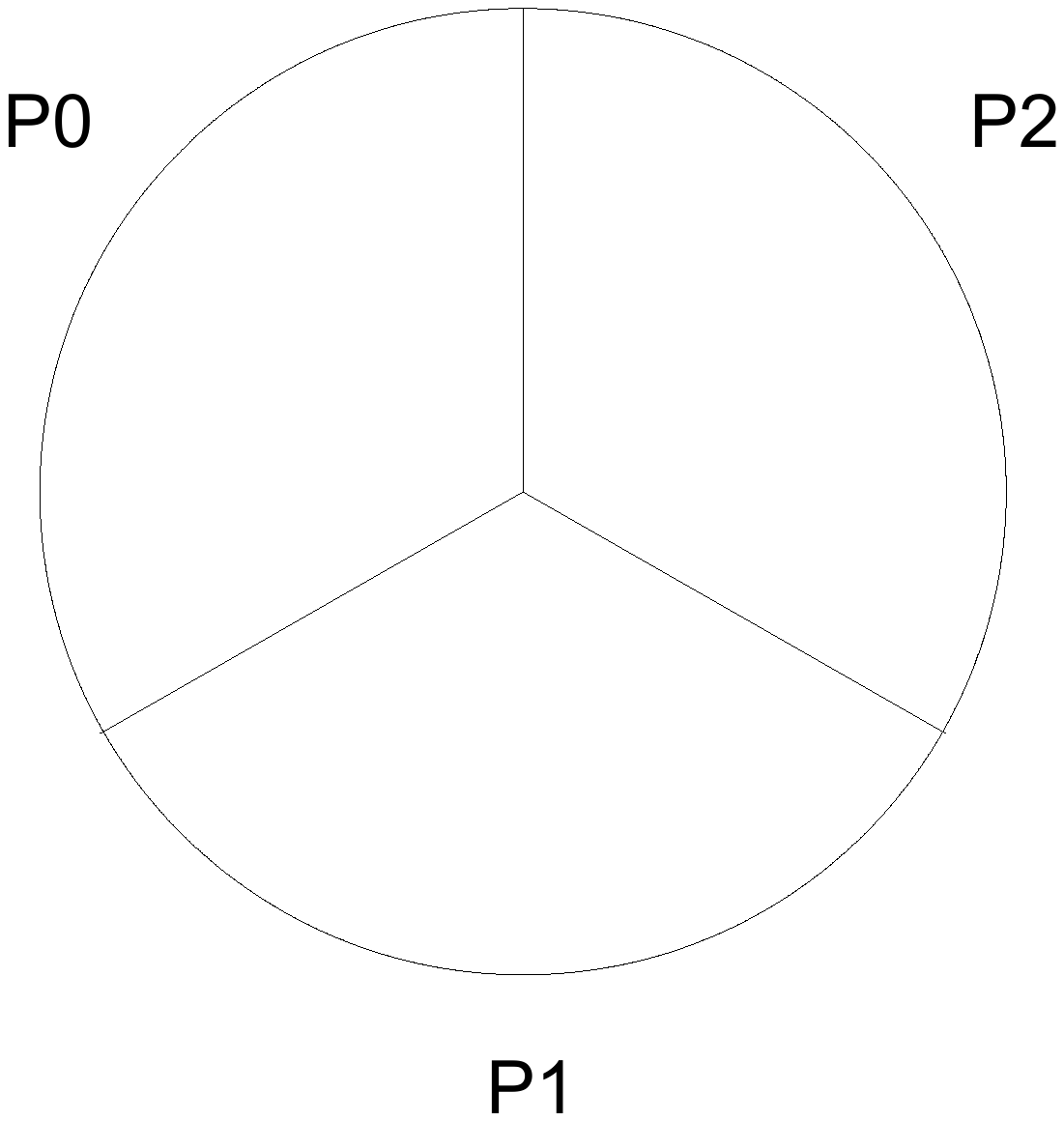}
\end{center}

We will use Theorem 3.1 to show:
\begin{theorem}
$\mathrm{Aut}(\mathbf{S}(3))$ is not amenable.
\end{theorem}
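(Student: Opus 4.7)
The plan is to apply the forward direction of Theorem 3.1 by exhibiting a nonzero $w \in V$ with no negative coefficients, which by that theorem rules out amenability of $\aut{\mathbf{S}(3)}$.

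First I would pin down which labelings lie in $\mathcal{K}^*$: since a directed edge $u\to v$ in $\mathbf{S}(3)$ corresponds to a counterclockwise displacement of less than $2\pi/3$, an expansion $\langle\mathbf{A},P_0,P_1,P_2\rangle$ lies in $\mathcal{K}^*$ if and only if $P(v)-P(u)\in\{0,1\}\pmod 3$ on every arc $u\to v$ of $\mathbf{A}$, and each such labeling is realizable on the circle. I would then take $\mathbf{T}=\{a,b,c\}\in\mrm{Fin}(\mathbf{S}(3))$ to be a transitive triangle, say $a=e^{i\cdot 0}$, $b=e^{i}$, $c=e^{2i}$, so that $a\to b$, $b\to c$, and $a\to c$ all hold (the angular gaps are $1$ and $2$, both less than $2\pi/3$). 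Write $x_{ijk}\in\mathcal{K}^*(\mathbf{T})$ for the expansion with $(P(a),P(b),P(c))=(i,j,k)$ when that expansion is valid. The crucial asymmetry between the ``short'' edge $\{a,b\}$ and the ``long'' edge $\{a,c\}$ of $\mathbf{T}$ is the following: the valid expansions with $P(a)=P(b)=0$ are exactly $x_{000}$ and $x_{001}$, whereas the unique valid expansion with $P(a)=P(c)=0$ is $x_{000}$, since taking $P(b)=1$ would violate the arc $b\to c$ and taking $P(b)=2$ would violate the arc $a\to b$.

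Now take $\mathbf{E}=\{u,v\}\in\mrm{Fin}(\mathbf{S}(3))$ with $u\to v$, let $y_{00}\in\mathcal{K}^*(\mathbf{E})$ be the labeling with $P(u)=P(v)=0$, and consider the two embeddings $\pi_1,\pi_2\colon\mathbf{E}\to\mathbf{T}$ defined by $\pi_1(u,v)=(a,b)$ and $\pi_2(u,v)=(a,c)$. By the observation above, $\mathcal{K}^*(y_{00},\mathbf{T},\pi_1)=\{x_{000},x_{001}\}$ while $\mathcal{K}^*(y_{00},\mathbf{T},\pi_2)=\{x_{000}\}$, so both $y_{00}-x_{000}-x_{001}$ and $y_{00}-x_{000}$ belong to $S\subset V$. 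Their difference $w := x_{001}$ is therefore a nonzero element of $V$ with a single positive coefficient and no negative coefficients, so by Theorem 3.1, $\aut{\mathbf{S}(3)}$ is not amenable.

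The step requiring the most care will be the initial geometric characterization of $\mathcal{K}^*$ via the ``adjacent-sector'' rule, which is really the only essentially geometric input; once that is in hand, the identification of the asymmetry inside $\mathbf{T}$ and the subtraction of two elements of $S$ are immediate.
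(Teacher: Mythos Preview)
Your argument is correct and more economical than the paper's. The paper takes $\mathbf{A}$ to be a two-vertex \emph{non-edge} and $\mathbf{B}$ a four-vertex directed path $w\to x\to y\to z$ (with no other edges), catalogs all twelve expansions of $\mathbf{B}$, and compares two embeddings of $\mathbf{A}$ into $\mathbf{B}$ to extract an element $(0,1,1,2)+(0,0,1,2)\in V$ with only positive coefficients. You instead use a single directed edge and the transitive $3$-tournament; the asymmetry between the short arc $a\to b$ and the composite arc $a\to c$ immediately yields the singleton $x_{001}\in V$. Working with tournaments (no non-edges) is what keeps the combinatorics minimal, and your witness in $V$ is as small as possible.

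One caveat: the general ``if and only if'' characterization of $\mathcal{K}^*$ via the arc rule $P(v)-P(u)\in\{0,1\}$ is not correct for arbitrary $\mathbf{A}\in\mathcal{K}$. A non-edge $\{u,v\}$ forces $P(u)\neq P(v)$, and even with both local rules, realizability on the circle must still be checked (this is why the paper's enumeration of $\mathcal{K}^*(\mathbf{B})$ requires some care). This does not damage your proof: $\mathbf{E}$ and $\mathbf{T}$ have no non-edges, you only invoke the arc rule as a \emph{necessary} condition (to exclude $P(b)\neq 0$ when $P(a)=P(c)=0$, and $P(c)=2$ when $P(a)=P(b)=0$), and the realizability of $x_{000}$ and $x_{001}$ is easy to verify directly by placing points near a sector boundary. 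Just be sure to phrase the geometric input accordingly rather than as a blanket equivalence.
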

\begin{proof}
Let $\mathbf{A}\in \mathrm{Fin}(\mathbf{K})$ be a directed graph consisting of two vertices $a$ and $b$ with no edge between them. Let $\mathbf{B}\in \mathrm{Fin}(\mathbf{K})$ be a directed graph with vertices $w$, $x$, $y$, and $z$, with edges $w\rightarrow x$, $x\rightarrow y$, and $y\rightarrow z$.
 
\begin{center}
\includegraphics*[viewport=50 450 300 750, scale=0.5]{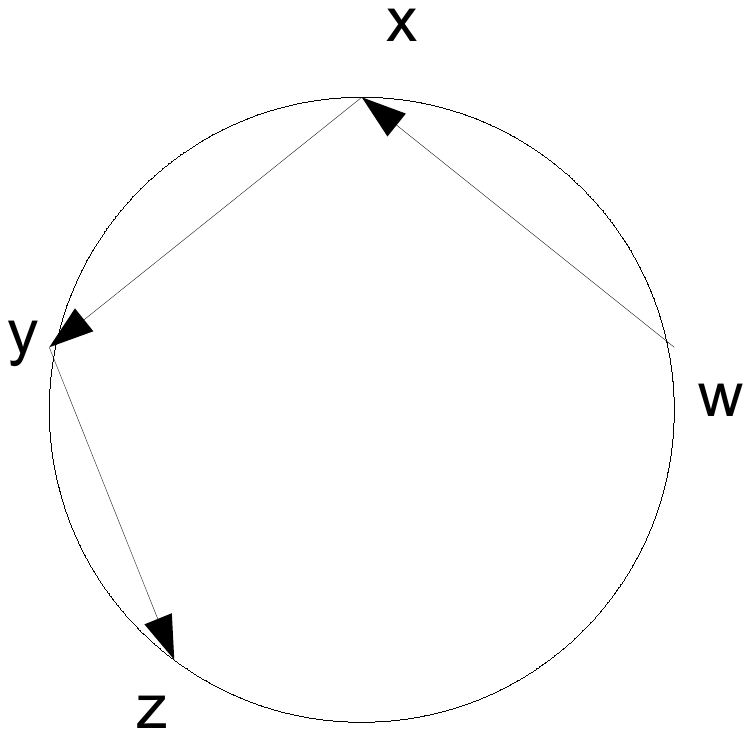}
\end{center}

Consider $\mathcal{K}^*(\mathbf{B})$; we list all 12 expansions of $\mathbf{B}$ below, where $(i, j, k, l)$ stands for $\langle \mathbf{B}, (P_1, P_2, P_3)\rangle \in \mathcal{K}^*(\mathbf{B})$ with $P_i(w)$, $P_j(x)$, $P_k(y)$, $P_l(z)$.
\begin{align*}
\mathcal{K}^*(\mathbf{B}) = \{&(0, 1, 1, 2), (0, 1, 2, 2), (1, 1, 2, 2), (1, 1, 2, 0),\\ 
&(1, 2, 2, 0), (1, 2, 0, 0), (2, 2, 0, 0), (2, 2, 0, 1),\\ 
&(2, 0, 0, 1), (2, 0, 1, 1), (0, 0, 1, 1), (0, 0, 1, 2)\}.
\end{align*}
To arrive at the above, first observe that we cannot have $P(w)=P(x)=P(y)$ nor $P(x)=P(y)=P(z)$, as this would require $w\rightarrow y$ or $x\rightarrow z$, respectively. Also observe that mod $3$, we cannot have any of $P(w) = P(x)+1$, $P(x) = P(y)+1$, nor $P(y) = P(z)+1$; this is because for any $a, b\in \mathbf{S}(3)^*$, we have $a\rightarrow b \Rightarrow P(a)\neq P(b)+1$. Lastly, we may not have $P(w) = P(z)$, as this would imply $z\rightarrow w$ or $w \rightarrow z$. Now without loss of generality fix $w \in P_1$. There are exactly four possible expansions meeting the necessary conditions, and all four are easily realized. The other eight expansions are then given by adding $1$ or $2$ mod 3 to each coordinate 
   
Let $x\in \mathcal{K}^*(\mathbf{A})$ denote the expansion with $P_0(a)$, $P_1(b)$. Let $\pi_1: \mathbf{A}\rightarrow \mathbf{B}$ denote the embedding $\pi_1(a) = w$, $\pi_1(b) = y$. Let $\pi_2: \mathbf{A}\rightarrow \mathbf{B}$ denote the embedding $\pi_2(a) = w$, $\pi_2(b) = z$. Now we have
\begin{align*}
\mathcal{K}^*(x, \mathbf{B}, \pi_1) &= \{(0, 1, 1, 2), (0, 0, 1, 1), (0, 0, 1, 2)\},\\
\mathcal{K}^*(x, \mathbf{B}, \pi_2) &= \{(0, 0, 1, 1)\}.
\end{align*}
Forming the spaces $\mathbb{R}\Omega$, $V$ as above, we have
\begin{align*}
[(0, 1, 1, 2) + (0, 0, 1, 1) + (0, 0, 1, 2)] - [(0, 0, 1, 1)] &\in V,\\
\Rightarrow (0, 1, 1, 2) + (0, 0, 1, 2) &\in V.
\end{align*}
Hence $\mathrm{Aut}(\mathbf{S}(3))$ is not amenable.
\end{proof}

\section{Application: Boron Trees}
A boron tree is a graph-theoretic unrooted tree where each vertex has degree $1$ or $3$. It is possible to interpret boron trees as a model theoretic structure such that the class of finite boron trees forms a Fra\"iss\'e class; we will closely follow the exposition of Jasi\'nski [J]. For a boron tree $T$, let $(T) = \langle L_T, R\rangle$ be the structure with universe $L_T$, the leaves of $T$, and $R$ a $4$-ary relation defined as follows: for leaves $a$, $b$, $c$, $d$, we have $R(a, b, c, d)$ if $a$, $b$, $c$, $d$ are distinct and there are paths from $a$ to $b$ and from $c$ to $d$ which do not intersect. It is a fact that for finite boron trees $T$ and $U$, $T\cong U$ if and only if $(T)\cong (U)$.

Let $\mathbf{B}(n)$ be the boron tree structure defined as follows: its universe is $B_n = 2^n = \{f: n\rightarrow 2\}$. For any two leaves $a$ and $b$, let $\delta(a,b) = \mathrm{max}(k:\: a|_k = b|_k)$. Define $[a, b] = \{a|_k:\; k\in [\delta(a,b), n]\}\cup \{b|_k:\; k\in [\delta(a,b), n]\}$. For distinct $a$, $b$, $c$, $d$, set $R(a,b,c,d)$ if $[a,b]\cap [c,d] = \emptyset$. The associated tree has vertices $2^{\leq n}\backslash \emptyset$; vertices $u:\; k\rightarrow 2$ and $v:\; k+1\rightarrow 2$ are adjacent if $v|_k = u$, and additionally the vertices $x_0, x_1: 1\rightarrow 2$ with $x_0(0) = 0$, $x_1(0) = 1$ are adjacent.

\begin{center}
\includegraphics*[viewport=50 380 450 700, scale=0.4]{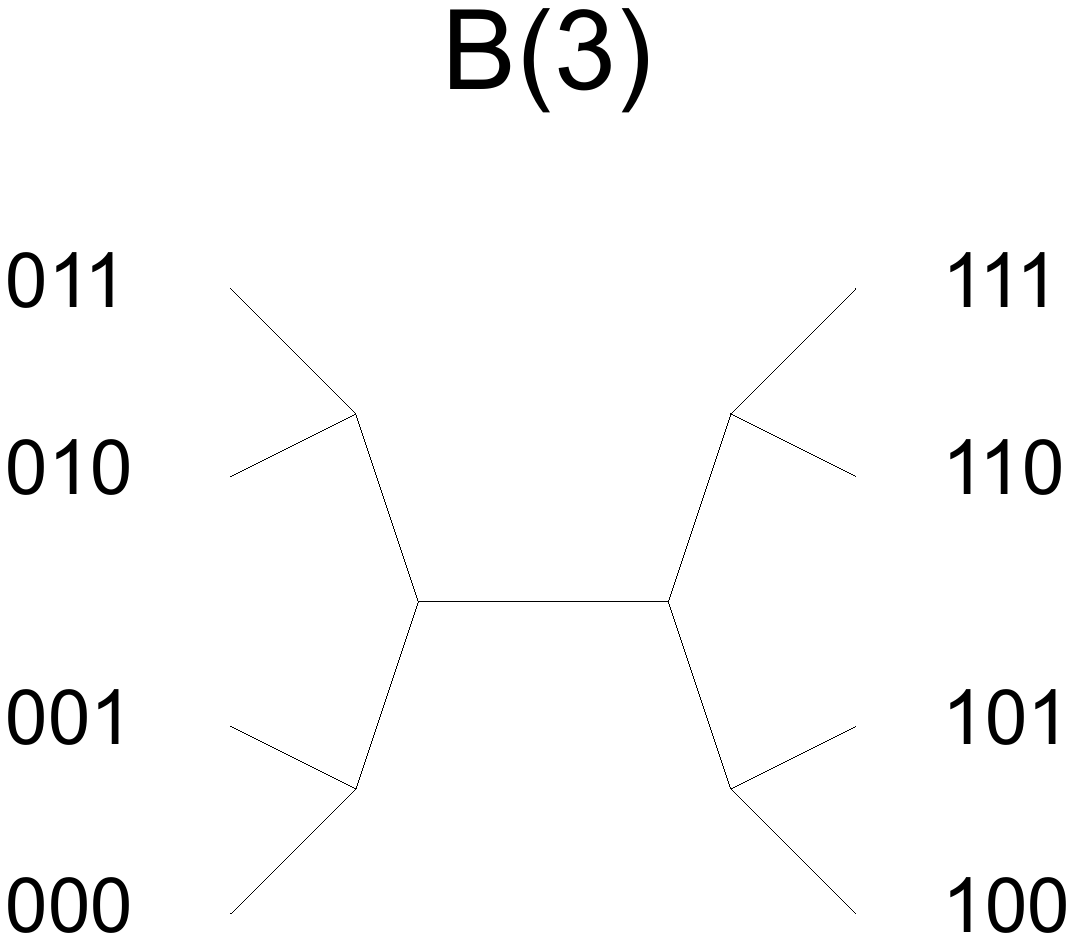} 
\end{center}

On each $\mathbf{B}(n)$, let $<_l$ be the lexicographic linear ordering of the leaves; $a <_l b$ if $a(0) < b(0)$ or $a(0) = b(0)$ and $a(1) < b(1)$ or etc. Also, for any function $f:\; k\rightarrow 2$, $k\leq n$, let $B_f(n)$ consist of those $a\in B(n)$ with $a|_k = f$.  

Let $\mathcal{B}$ be the class of finite boron tree structures. It is a fact that the set $\{\mathbf{B}(n):\; n\in\mathbb{N}\}$ is cofinal in $\mathcal{B}$; for each $\mathbf{A}\in \mathcal{B}$, there is an $n$ for which there is an embedding $\pi:\; \mathbf{A} \rightarrow \mathbf{B}(n)$. We will use this fact in defining the companion class $\mathcal{B}^*$ such that $(\mathcal{B}, \mathcal{B}^*)$ is an excellent pair. Let $\mathbf{A}\in \mathcal{B}$, and let $\pi:\; \mathbf{A} \rightarrow \mathbf{B}(n)$ be an embedding for some $n$. Define the structure $o(\mathbf{A}, \pi) = \langle \mathbf{A}, S\rangle$ as follows: $S$ is a $3$-ary relation, where for $a, b, c\in A$, we have $S(a,b,c)$ if $\pi(a),\pi(b) <_l \pi(c)$ and $\delta(\pi(a),\pi(b)) > \delta(\pi(b),\pi(c))$. We can rephrase this condition in a somewhat easier to grasp way: for $x,y\in B(n)$, let $M(x,y)\in B(n)$ be as follows: 

\begin{equation*}
M(x,y)(j) = 
\begin{cases}
x(j) & \text{if $j < \delta(x,y)$},\\
1 & \text{if $j\geq \delta(x,y)$}.
\end{cases}
\end{equation*}
\vspace{2 mm}

Now we have $S(a,b,c)$ exactly when $M(\pi(a),\pi(b)) <_l \pi(c)$. Note that $S(a,b,c)$ holds whenever $S(b,a,c)$ does. Let $\mathcal{B}^*$ be the class of all such $o(\mathbf{A}, \pi)$. It is a fact that $(\mathcal{B},\mathcal{B}^*)$ is an excellent pair (see [J]), with limits $\mathbf{T}, \mathbf{T}^*$. We will show:
\begin{theorem}
$\mathrm{Aut}(\mathbf{T})$ is not amenable.
\end{theorem}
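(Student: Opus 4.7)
My plan is to apply the forward direction of Theorem 3.1 by exhibiting a nonzero element of $V$ whose coefficients all have the same sign, following the blueprint of the $\mathbf{S}(3)$ argument. I would let $\mathbf{A}$ be the 3-leaf tripod on $\{a, b, c\}$ and let $x \in \mathcal{B}^*(\mathbf{A})$ be the expansion in which the ternary relation $S$ holds exactly at $(a,b,c)$ and $(b,a,c)$, so that $\{a,b\}$ is the ``central pair'' and $c$ is the ``lex-max''; this is realized by $o(\mathbf{A},\sigma_0)$ with $\sigma_0(a,b,c) = (000, 001, 111)$ in $\mathbf{B}(3)$. For $\mathbf{B}$ I would take a slightly larger boron tree---the 4-leaf tree on $\{p, q, r, s\}$ with quartet $(\{p,q\}\mid\{r,s\})$ is the first natural candidate, escalating to a 5-leaf tree if it proves insufficient---and pick two embeddings $\pi_1, \pi_2 : \mathbf{A} \to \mathbf{B}$ that place $\{a,b\}$ differently relative to $\mathbf{B}$'s sibling clusters; for instance $\pi_1(a,b,c) = (p,q,r)$ (central pair mapped to a cluster of $\mathbf{B}$) and $\pi_2(a,b,c) = (p,r,s)$ (central pair mapped across the cluster partition).

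The candidate witness is the difference of the two corresponding generators of $V$,
\[
v \;=\; \sum_{y \in \mathcal{K}^*(x, \mathbf{B}, \pi_1)} y \;-\; \sum_{y \in \mathcal{K}^*(x, \mathbf{B}, \pi_2)} y \;\in\; V,
\]
and the aim is to arrange $\mathcal{K}^*(x, \mathbf{B}, \pi_2) \subsetneq \mathcal{K}^*(x, \mathbf{B}, \pi_1)$ so that $v$ has all nonnegative coefficients with at least one strictly positive, contradicting the amenability criterion of Theorem 3.1. Verifying this amounts to enumerating $\mathcal{B}^*(\mathbf{B})$: each element is $o(\mathbf{B},\sigma)$ for some embedding $\sigma: \mathbf{B} \to \mathbf{B}(n)$ (two such embeddings yielding the same element precisely when they induce the same $S$-relation on $\mathbf{B}$), and membership in $\mathcal{K}^*(x, \mathbf{B}, \pi_i)$ is determined by reading off the $T$-structure on the triple $\pi_i(\{a,b,c\})$ and checking it matches the $S$-structure of $x$ pulled back along $\pi_i$.

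The hard part will be this enumeration, because $\mathcal{B}^*(\mathbf{B})$ is richer than a naive ``cluster-ordering'' count would predict: the relation $S$ on $\mathbf{B}$ depends on the $\delta$-values of the auxiliary embedding into $\mathbf{B}(n)$, so the central pair of a three-leaf subset of $\mathbf{B}$ in an expansion is a genuine invariant of $\sigma$ rather than being determined by the tree structure of $\mathbf{B}$ alone. For example, the tripod $\{p,r,s\} \subseteq \mathbf{B}$ can realize central pair $\{p,r\}$ in some expansions (e.g.\ $\sigma: p,q,r,s \mapsto 0000, 0001, 0010, 0100$) even though $\{r,s\}$ are the tree-theoretic siblings, so the enumeration must carefully catalogue which global $T$-patterns on the triples of $\mathbf{B}$ are jointly realizable. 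Once the correct $(\mathbf{A}, x, \mathbf{B}, \pi_1, \pi_2)$ is pinned down so that the strict containment $\mathcal{K}^*(x,\mathbf{B},\pi_2) \subsetneq \mathcal{K}^*(x,\mathbf{B},\pi_1)$ holds with nonempty difference, non-amenability of $\mathrm{Aut}(\mathbf{T})$ follows immediately from Theorem 3.1.
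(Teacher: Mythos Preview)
Your setup is exactly the paper's: the same $\mathbf{A}$ (three-leaf tripod), the same $x=[a<b<c;(a,b,c)]$, the same $\mathbf{B}=\mathbf{B}(2)$ with leaves $w,x,y,z$ and quartet $\{w,x\}\mid\{y,z\}$, and under the relabeling $p,q,r,s\leftrightarrow w,x,y,z$ your $\pi_1,\pi_2$ are precisely the paper's. The gap is that the strict containment $\mathcal{K}^*(x,\mathbf{B},\pi_2)\subsetneq\mathcal{K}^*(x,\mathbf{B},\pi_1)$ you are hoping for does \emph{not} hold for these choices. The paper's enumeration gives
\[
\mathcal{B}^*(x,\mathbf{B}(2),\pi_1)=\{a_1,a_2,c_7,d_7,e_1,e_2\},\qquad
\mathcal{B}^*(x,\mathbf{B}(2),\pi_2)=\{e_1,e_3\},
\]
so $e_3$ lies in the second set but not the first (in $e_3$ the induced order on $\{w,x,y\}$ is $x<w<y$, which is incompatible with $\pi_1$). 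Thus your single difference $v$ has a genuine negative coefficient at $e_3$, and no amount of careful bookkeeping in the enumeration will make it disappear.

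The paper repairs this with a second, independent element of $V$: using the automorphism $\phi_2:(w,x,y,z)\mapsto(x,w,z,y)$ of $\mathbf{B}(2)$ one gets $\mathcal{B}^*(e_2,\mathbf{B}(2),\mathrm{id})=\{e_2\}$ and $\mathcal{B}^*(e_2,\mathbf{B}(2),\phi_2)=\{e_3\}$, hence $e_2-e_3\in V$. Subtracting this from $a_1+a_2+c_7+d_7+e_2-e_3$ yields $a_1+a_2+c_7+d_7\in V$ with all coefficients strictly positive, and Theorem~3.1 applies. So your plan is right in spirit and almost identical to the paper's, but it needs this extra ``symmetry correction'' step; escalating to a five-leaf $\mathbf{B}$ is unnecessary once you allow yourself to combine two generators of $V$ rather than insisting on a single containment.
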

\begin{proof}
We begin with the following proposition.
\begin{prop}
Let $o(\mathbf{A}, \pi)\in \mathcal{B}^*$, with $\pi:\; \mathbf{A} \rightarrow \mathbf{B}(n)$, and $|A| = l$. Then there exists an embedding $\phi:\; \mathbf{A} \rightarrow \mathbf{B}(l-1)$ with $o(\mathbf{A}, \pi) = o(\mathbf{A}, \phi)$.
\end{prop}

\begin{proof}
Suppose there exists $k\in \{0,1,...,n-1\}$ such that for any $a, b\in A$, we have $\pi(a)|_k = \pi(b)|_k \Rightarrow \pi(a)|_{k+1} = \pi(b)|_{k+1}$, or equivalently that for no $a, b\in A$ do we have $\delta(\pi(a),\pi(b))=k$. We will show that there is an embedding $\psi:\; \mathbf{A} \rightarrow \mathbf{B}(n-1)$ with $o(\mathbf{A}, \psi) = o(\mathbf{A}, \pi)$. For $a\in B(n)$, define the map $f_k:\; B(n) \rightarrow B(n-1)$ as follows.
\vspace{2 mm}
\begin{equation*}
f_k(a)(j) = 
\begin{cases}
a(j) & \text{if $j<k$},\\
a(j+1) & \text{if $j\geq k$}.
\end{cases}
\end{equation*}
\vspace{2 mm}

Define $\psi:\; A\rightarrow B(n-1)$ by $\psi = f_k\circ \pi$. First let us show that $\psi$ is an embedding $\mathbf{A}\rightarrow \mathbf{B}(n-1)$. It will be useful to note that
\begin{equation*}
\delta(\psi(a),\psi(b)) = 
\begin{cases}
\delta(\pi(a),\pi(b)) & \text{if $\delta(\pi(a),\pi(b))<k$},\\
\delta(\pi(a),\pi(b))-1 & \text{if $\delta(\pi(a),\pi(b))>k$}.
\end{cases}
\end{equation*}

Let $a,b,c,d\in A$ be distinct, and suppose $R(a,b,c,d)$. As $\pi$ is an embedding, we have $[\pi(a),\pi(b)] \cap [\pi(c),\pi(d)]= \emptyset$. Suppose, without loss of generality and for sake of contradiction, that $\psi(a)|_m = \psi(c)|_m \in [\psi(a), \psi(b)] \cap [\psi(c), \psi(d)]$. If $m<k$, we have that $\pi(a)|_m = \pi(c)|_m\in [\pi(a),\pi(b)] \cap [\pi(c),\pi(d)]$, a contradiction. If $m\geq k$, then $\pi(a)|_{m+1} \in [\pi(a),\pi(b)]$ and $\pi(c)|_{m+1} \in [\pi(c),\pi(d)]$. Therefore we must have $\pi(a)|_{m+1} \neq \pi(c)|_{m+1}$. However, this is only possible if $\pi(a)(k) \neq \pi(c)(k)$. As $\pi(a)|_k = \pi(c)|_k$, this is a contradiction. Therefore we have $R(\psi(a), \psi(b), \psi(c), \psi(d))$.
\vspace{2 mm}

If $\neg R(a,b,c,d)$, we may suppose $\pi(a)|_m = \pi(c)|_m \in [\pi(a),\pi(b)] \cap [\pi(c),\pi(d)]$. If $m<k$, then $\psi(a)|_m = \psi(c)|_m \in [\psi(a), \psi(b)] \cap [\psi(c), \psi(d)]$. If $m \geq k$, then $\psi(a)|_{m-1} = \psi(c)|_{m-1} \in [\psi(a), \psi(b)] \cap [\psi(c), \psi(d)]$. Hence we have $\neg R(\psi(a), \psi(b), \psi(c), \psi(d))$.
\vspace{2 mm}

Now we will show that $o(\mathbf{A}, \psi) = o(\mathbf{A}, \pi)$. Suppose $S^{\pi}(a,b,c)$, or equivalently $M(\pi(a), \pi(b)) <_l \pi(c)$. Observe that $M(\psi(a), \psi(b)) = f_k(M(\pi(a), \pi(b)))$. Now if $\psi(c) <_l M(\psi(a), \psi(a))$, we must have $\pi(c)|_k = M(\pi(a),\pi(b))|_k$; it follows that $\delta(\pi(a),\pi(b)) > k$. Let $m$ be least such that $\pi(c)(m) \neq \pi(a)(m)$. Then we must have $k < m < \delta(\pi(a),\pi(b))$. Then $m-1$ is least such that $\psi(c) \neq \psi(a)$, and $m-1 < \delta(\psi(a),\psi(c))$. Hence $S^{\psi}(a,b,c)$. 
\vspace{0 mm}

Now suppose $S^{\psi}(a,b,c)$. Let $m$ be least such that $\psi(c)(m) \neq \psi(a)(m)$. Then $m < \delta(\psi(a),\psi(b)) \leq \delta(\pi(a),\pi(b))$. Now if $m < k$, then $m$ is least such that $\pi(c)(m) \neq \pi(a)(m)$. If $m > k$, then $m+1$ is least such that $\pi(c)(m+1) \neq \pi(a)(m+1)$, and $m+1 < \delta(\pi(a), \pi(b))$. Hence $S^{\pi}(a,b,c)$, and $o(\mathbf{A}, \psi) = o(\mathbf{A}, \pi)$.
\vspace{2 mm}

Now suppose no such $k$ exists. We will show that $|A| \geq n+1$. More precisely, we will show that for any $m$ and embedding $\phi:\; \mathbf{A}\rightarrow \mathbf{B}(m)$, the map $\delta_{\phi}:\; A\times A\rightarrow m+1$ with $\delta_{\phi} (a,b) = \delta (\phi(a),\phi(b))$ has $|\delta_{\phi}(A\times A)|\leq n$. For $n=1$ this is clear. Assume the result true for $n=l$, and suppose $|A| = l+1$. Fix $a\in A$, and let $b\neq a$ be such that $\delta_{\phi} (a,b)$ is maximal. Let $c\in A$ with $c\neq a$ and $c\neq b$. Then
\begin{equation*}
\delta_{\phi} (a,c) = 
\begin{cases}
\delta_{\phi} (a,b) & \text{if $\delta_{\phi} (b,c) >\delta_{\phi} (a,b)$},\\
\delta_{\phi} (b,c) & \text{if $\delta_{\phi} (b,c) <\delta_{\phi} (a,b)$}.
\end{cases}
\end{equation*} 
We see that $|\delta_{\phi}(A\times A)| = |\delta_{\phi} (A\backslash a \times A\backslash a)\cup \{\delta_{\phi} (a,b)\}| \leq l+1$
\end{proof} 

With this result, we can now compute $\mathcal{B}^*(\mathbf{A})$ for $\mathbf{A}\in \mathrm{Fin}(\mathbf{T})$; we will exhibit an element of $V\subset \mathbb{R}\Omega$ showing that $\mathrm{Aut}(\mathbf{T})$ is not amenable. Let $\mathbf{A}\in \mathrm{Fin}(\mathbf{T})$ be a boron tree with three leaves $a,b,c$. We list the $12$ expansions in $\mathcal{B}^*(\mathbf{A})$ below; we will denote an element of $\mathcal{B}^*(\mathbf{A})$ by $[a_1<...<a_k;\; (x_1, y_1, z_1),...,(x_l, y_l, z_l)]$, where we have $S(a_i, a_i, a_j)$ for $i<j$, $S(x_i, y_i, z_i)$ and $S(y_i, x_i, z_i)$ for $1\leq i\leq l$.
\vskip -1 mm
\begin{align*}
\mathcal{B}^*(\mathbf{A}) =  \{ & [a<b<c;\; \emptyset],[a<c<b;\; \emptyset],[b<a<c;\; \emptyset],\\
& [b<c<a;\; \emptyset],[c<a<b;\; \emptyset],[c<b<a;\; \emptyset],\\
& [a<b<c;\; (a,b,c)],[a<c<b;\; (a,c,b)],[b<a<c;\; (b,a,c)],\\
& [b<c<a;\; (b,c,a)],[c<a<b;\; (c,a,b)],[c<b<a;\; (c,b,a)]\}.
\end{align*}

Now consider $\mathbf{B}(2)\in \mathrm{Fin}(\mathbf{T})$, where $B(2) = \{w = 00, x = 01, y = 10, z = 11\}$; this structure has $40$ expansions. These can be split into $5$ types, each of which has $8$ expansions.

\begin{enumerate}
\item
Type $A$: Expansions of the form $o(\mathbf{B}(2), \pi)$ with $\pi:\; \mathbf{B}(2)\rightarrow \mathbf{B}(2)$. Below, $(s,t,u,v)$ stands for the expansion $[s<t<u<v; (s,t,u), (s,t,v)]$.
\begin{align*}
A = \{a_1,...,a_8\} = \{&(w,x,y,z), (w,x,z,y), (x,w,y,z), (x,w,z,y),\\
& (y,z,w,x), (y,z,x,w), (z,y,w,x), (z,y,x,w)\}.
\end{align*}

\item
Type $B$: Expansions of the form $o(\mathbf{B}(2), \pi)$ with $\pi:\; \mathbf{B}(2)\rightarrow \mathbf{B}(3)$ such that $|B_0(3)\bigcap \pi(A)| = 1$ and $|B_{10}(3)\bigcap \pi(A)| = 1$. Below, $(s,t,u,v)$ stands for the expansion $[s<t<u<v; \emptyset]$.
\begin{align*}
B = \{b_1,...,b_8\} = \{&(w,x,y,z), (w,x,z,y), (x,w,y,z), (x,w,z,y),\\
& (y,z,w,x), (y,z,x,w), (z,y,w,x), (z,y,x,w)\}.
\end{align*}

\item
Type $C$: Expansions of the form $o(\mathbf{B}(2), \pi)$ with $\pi:\; \mathbf{B}(2)\rightarrow \mathbf{B}(3)$ such that $|B_0(3)\bigcap \pi(A)| = 1$ and $|B_{11}(3)\bigcap \pi(A)| = 1$. Below, $(s,t,u,v)$ stands for the expansion $[s<t<u<v; (t,u,v)]$.
\begin{align*}
C = \{c_1,...,c_8\} = \{&(w,y,z,x), (w,z,y,x), (x,y,z,w)), (x,z,y,w),\\
& (y,w,x,z), (y,x,w,z), (z,w,x,y), (z,x,w,y)\}.
\end{align*}

\item
Type $D$: Expansions of the form $o(\mathbf{B}(2), \pi)$ with $\pi:\; \mathbf{B}(2)\rightarrow \mathbf{B}(3)$ such that $|B_1(3)\bigcap \pi(A)| = 1$ and $|B_{00}(3)\bigcap \pi(A)| = 1$. Below, $(s,t,u,v)$ stands for the expansion $[s<t<u<v; (s,t,v), (s,u,v), (t,u,v)]$.
\begin{align*}
D = \{d_1,...,d_8\} = \{&(w,y,z,x), (w,z,y,x), (x,y,z,w)), (x,z,y,w),\\
& (y,w,x,z), (y,x,w,z), (z,w,x,y), (z,x,w,y)\}.
\end{align*}

\item
Type $E$: Expansions of the form $o(\mathbf{B}(2), \pi)$ with $\pi:\; \mathbf{B}(2)\rightarrow \mathbf{B}(3)$ such that $|B_1(3)\bigcap \pi(A)| = 1$ and $|B_{01}(3)\bigcap \pi(A)| = 1$. Below, $(s,t,u,v)$ stands for the expansion $[s<t<u<v; (s,t,u), (s,t,v), (s,u,v), (t,u,v)]$.
\begin{align*}
E = \{e_1,...,e_8\} = \{&(w,x,y,z), (w,x,z,y), (x,w,y,z), (x,w,z,y),\\
& (y,z,w,x), (y,z,x,w), (z,y,w,x), (z,y,x,w)\}.
\end{align*}
\end{enumerate}

Now set $x = [a<b<c; (a,b,c)]\in \mathcal{B}^*(\mathbf{A})$. Let $\pi_1,\pi_2:\; \mathbf{A}\rightarrow \mathbf{B}(2)$ with $\pi_1(a,b,c) = (w,x,y)$, $\pi_2(a,b,c) = (w,y,z)$. We see that $\mathcal{B}^*(x,\mathbf{B}(2),\pi_1) = \{a_1, a_2, c_7, d_7, e_1, e_2\}$ and $\mathcal{B}^*(x,\mathbf{B}(2),\pi_2) = \{e_1, e_3\}$. Thus in $\mathbb{R}\Omega$,
\begin{equation*}
a_1+a_2+c_7+d_7+e_2-e_3\in V.
\end{equation*}
Now let $\phi_1,\phi_2: \mathbf{B}(2)\rightarrow \mathbf{B}(2)$ with $\phi_1(w,x,y,z) = (w,x,y,z)$, $\phi_2(w,x,y,z) = (x,w,z,y)$. We have that $\mathcal{B}^*(e_2, \mathbf{B}(2), \phi_1) = \{e_2\}$, and $\mathcal{B}^*(e_2, \mathbf{B}(2), \phi_2) = \{e_3\}$. In $\mathbb{R}\Omega$,
\begin{align*}
e_2-e_3 &\in V,\\
\Rightarrow a_1+a_2+c_7+d_7 &\in V.
\end{align*}
Therefore $\mathrm{Aut}(\mathbf{T})$ is not amenable.
\end{proof}

\section{Proof of Theorem 1.2}

For the remainder of this paper, we shift our focus to the class $\mathcal{K}$, the class of finite dimensional vector spaces over a fixed finite field $F_q$, which has companion $\mathcal{K}^*$, the class of such vector spaces equipped with a natural linear ordering, an ordering induced antilexicographically by some choice of ordered basis and a fixed ordering of $F_q$ with $0<1$ the least elements. This is to say that if $b_0>\cdots >b_{n-1}$ is the ordered basis we have chosen and $x_i, y_i\in F_q$, then $x_0b_0+\cdots +x_{n-1}b_{n-1}> y_0b_0+\cdots +y_{n-1}b_{n-1}$ if $x_\ell > y_\ell$, where $\ell$ is the least index where $x_\ell \neq y_\ell$. In particular, notice that there is a 1-1 correspondence between ordered bases and natural orderings. In what follows, all vector spaces are assumed to be over $F_q$. Set $\mathrm{Flim}(\mathcal{K}) = \mathbf{V}_\infty$, the countably infinite dimensional vector space over $F_q$, and $\mathrm{Flim}(\mathcal{K}^*) = \mathbf{V}_\infty^*$. It is shown in [AKL] that $\aut{\mathbf{V}_\infty}$ is uniquely ergodic. The unique measure is just the uniform measure: for any finite $V\subseteq \mathbf{V}_\infty$, there are $|\mathrm{GL}(V)|$ admissible expansions, each of which has measure $1/|\mathrm{GL}(V)|$. Another way to see this is to notice the following: if $U$ is finite dimensional, $V, W\subseteq U$ have the same dimension, and $<_V, <_W$ are natural linear orderings on $V, W$, then there is an automorphism of $U$ sending $\langle V, <_V\rangle$ to $\langle W, <_W\rangle$.

Given any vector space $V$ with any ordering $<$, say that $\langle V, <\rangle$ has the \emph{Finite Lex Property} (FLP) if for any finite dimensional subspace $U\subset V$, we have that $<\!|_U$ is a natural ordering. In particular, a finite dimensional, ordered vector space has the FLP iff that ordering is natural (See Thomas [Th]), and $\langle \mathbf{V}_{\infty}, <\rangle$ has the FLP iff $< \,\in \!X_{\mathcal{K}^*}$. Given $v\in V$, $v \neq 0$, say that $v$ is \emph{minimal in its line} if $v\leq cv$ for all $c\in F_q\backslash \{0\}$. Given $u, v\in V$, define the relation $u \ll v \Leftrightarrow cu < dv$ for all $c, d\in F_q\backslash \{0\}$. Also define $u\sim v \Leftrightarrow (u \not\ll v) \wedge (v\not\ll u)$. For $q=2$, we need to tweak these definitions a bit. We define $u\sim v$ to hold when $u+v < min(u,v)$. We have $u\ll v$ if $u < v$ and $u\nsim v$. Note that if $u\sim v$, then $cu\sim dv$ for all $c,d\in F_q\backslash \{0\}$. Also, if $u,v\in \langle V, <\!|_V\rangle\subset\langle U, <\rangle$, then $u \ll_V v \Leftrightarrow u \ll_U v$ and $u\sim_V v\Leftrightarrow u\sim_U v$. 

When $\langle V, <\rangle$ is finite dimensional, $\sim$ has a simple characterization. Say $<$ is given by ordered basis $Z = \{z_0>\cdots >z_{n-1}\}$. For $v\in V\backslash \{0\}$, write $v = v_0z_0+\cdots +v_{n-1}z_{n-1}$, and suppose $l$ is least with $v_l$ nonzero. Then for $u = u_0z_0+\cdots +u_{n-1}z_{n-1}$, we have $u\sim v$ exactly when $l$ is also least with $u_l$ nonzero.

\begin{lemma}
For $\langle V, <\rangle$ with the FLP, $\sim$ is an equivalence relation.
\end{lemma}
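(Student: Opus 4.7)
The plan is to reduce the transitivity of $\sim$ to a calculation inside a finite-dimensional subspace, where the characterization of $\sim$ via ``leading coordinates'' (stated immediately before the lemma) makes the claim transparent.

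Reflexivity and symmetry of $\sim$ are immediate from the definitions. Symmetry is built into both the $q>2$ and $q=2$ formulations; for reflexivity when $q>2$, choosing $c=d$ gives $cv=dv$, which negates $v\ll v$, and for $q=2$ we have $v+v=0<v$ whenever $v\neq 0$, since the zero vector lies below every nonzero vector in any natural ordering (its $Z$-expansion is identically $0$).

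For transitivity, suppose $u \sim v$ and $v \sim w$. Let $U = \mathrm{span}(u,v,w) \subseteq V$, a finite-dimensional subspace. By the FLP, $<\!|_U$ is a natural ordering on $U$, arising from some ordered basis $z_0 > \cdots > z_{k-1}$. The paper already observes that both $\ll$ and $\sim$ are preserved when passing between $\langle V, <\rangle$ and $\langle U, <\!|_U\rangle$, so it suffices to verify transitivity inside $U$. Applying the stated characterization, $x \sim y$ in $U$ iff the least index $\ell$ at which the $Z$-expansion of $x$ has a nonzero coefficient agrees with the corresponding index for $y$. ``Having the same leading index'' is patently an equivalence relation on nonzero vectors, so $u \sim v$ and $v \sim w$ yield $u \sim w$.

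The main technical point is thus the characterization itself, which is asserted but not derived in the excerpt. For $q > 2$: if $u$ and $v$ share a leading index $\ell$, then scalings $c,d$ can either be chosen to force $cu_\ell = dv_\ell$ (so the comparison of $cu$ with $dv$ is decided by higher-indexed coordinates, which can be arranged either way) or to make $cu_\ell \neq dv_\ell$ (again realizable in either direction), so neither $u \ll v$ nor $v \ll u$; conversely, if the leading indices differ, say $\ell_u < \ell_v$, then $cu$ has nonzero $\ell_u$-th coordinate while $dv$ has zero there, so $cu > dv$ for all nonzero $c,d$, i.e.\ $v \ll u$. For $q=2$, the coefficient $(u+v)_\ell$ vanishes precisely when $u_\ell = v_\ell = 1$, which is exactly the condition that $u$ and $v$ share a leading index, and this is in turn equivalent to $u+v < \min(u,v)$. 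These short antilexicographic checks are where the real work lies; once established, transitivity drops out formally.
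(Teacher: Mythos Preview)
Your proof is correct and follows essentially the same route as the paper: pass to the span $U=\langle u,v,w\rangle$, invoke the FLP to get a natural ordering with an ordered basis, and use the leading-index characterization of $\sim$ to read off transitivity. The paper's version is marginally more concrete---it observes that at least one of $u_0,v_0,w_0$ is nonzero (since $u,v,w$ span $U$) and hence all three are---whereas you simply note that ``same leading index'' is an equivalence relation; your additional justification of the characterization itself and of reflexivity/symmetry goes beyond what the paper spells out, but the core argument is the same.
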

\begin{proof}
Suppose $u\sim v$ and $v\sim w$. Form $\langle u, v, w\rangle := U$, and let $\{x_0>...>x_l\}$ be the basis of $U$ inducing $<\!|_U$, $l\leq 2$. Write $u = u_0x_0 +...+u_lx_l$, $u_i\in F_q$; likewise for $v$, $w$. Observe that one of $u_0$, $v_0$, $w_0$ is nonzero; as $u\sim v$ and $v\sim w$, all three must be nonzero. It follows that $u\sim w$.
\end{proof}

Observe that $V/\!\sim$, the set of nonzero equivalence classes, is also linearly ordered by $A<B \Leftrightarrow a<b$ for all $a\in A$, $b\in B$. We will denote by $<\!\!/\!\!\sim$ the order type of this linear ordering. It will be useful to introduce a standard notation for the $n$ nonzero equivalence classes of an $n$-dimensional, naturally ordered vector space $\langle V, <\rangle$. Call these classes $[V,<]_{n-1} <...<[V,<]_0$; note that $v\in [V,<]_l$ exactly when $l$ is least with $v_l$ nonzero.

Fix $V_1\!\subset \!\mathbf{V}_\infty$ a $1$-dimensional subspace. Given $<\in X_{\mathcal{K}^*}$, observe that for any $u, v\in V_1\backslash \{0\}$ we have $u\sim v$. Call this equivalence class $[V_1, <]$. Define
$$N_{V_1}^k = \{<\,\in \!X_{\mathcal{K}^*}:\; [V_1, <] < [u] \text{ for at most } k \text{ equivalence classes } [u]\in \mathbf{V}_\infty /\!\sim\}.$$
Note that $N_{V_1}^k \subseteq N_{V_1}^l$ when $k\leq l$. Let $N_{V_1}^{fin} = \bigcup_{k\in \mathbb{N}} N_{V_1}^k$. 

\begin{prop}
For any $<\in N_{V_1}^{fin}$, $\langle \mathbf{V}_\infty, <\rangle$ is not a \fr structure.
\end{prop}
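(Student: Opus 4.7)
The plan is to show that the Extension Property fails for $\langle\mathbf{V}_\infty,<\rangle$, which precludes it from being a Fra\"iss\'e structure. Fix $k$ with $<\,\in N_{V_1}^k$, and let $v_0$ be the minimum nonzero element of $V_1$ under $<\!|_{V_1}$, so that $\{v_0\}$ is the ordered basis inducing this natural ordering on $V_1$.

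I would then construct an auxiliary $\mathbf{B}\in\mathcal{K}^*$ of dimension $k+2$ with ordered basis $b_0>b_1>\cdots>b_{k+1}$, identifying $b_{k+1}$ with $v_0$. The map $V_1\hookrightarrow\mathbf{B}$ sending $cv_0\mapsto cb_{k+1}$ is an embedding of ordered $F_q$-structures, since both natural orderings restrict to the $F_q$-ordering on scalar multiples of the common minimum nonzero element. By the characterization of $\sim$ in a naturally ordered finite-dimensional space, the $\sim$-class of $V_1$ inside $\mathbf{B}$ is $[\mathbf{B},<_\mathbf{B}]_{k+1}$, the minimum, with exactly $k+1$ classes strictly above it.

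Before applying the Extension Property, I would observe that the age of $\langle\mathbf{V}_\infty,<\rangle$ is all of $\mathcal{K}^*$: every finite-dimensional subspace carries a natural ordering by the FLP, and any two $n$-dimensional naturally ordered $F_q$-vector spaces are isomorphic via the map between ordered bases, so $\mathcal{K}^*$ has a unique isomorphism type in each dimension. Hence if $\langle\mathbf{V}_\infty,<\rangle$ were Fra\"iss\'e, the Extension Property would yield an embedding $g:\mathbf{B}\to\langle\mathbf{V}_\infty,<\rangle$ extending the inclusion $V_1\hookrightarrow\mathbf{V}_\infty$. Since $g$ is order-preserving, $<\!|_{g(\mathbf{B})}$ is the natural ordering with basis $g(b_0)>\cdots>g(b_{k+1})=v_0$, and so $g(\mathbf{B})/\!\sim$ has $k+2$ classes with $[V_1]$ as the minimum. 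By the compatibility $\sim_{g(\mathbf{B})}=\sim_{\mathbf{V}_\infty}\!|_{g(\mathbf{B})}$ recorded in the excerpt, the $k+1$ classes strictly above $[V_1]$ in $g(\mathbf{B})/\!\sim$ remain pairwise distinct and strictly above $[V_1]$ in $\mathbf{V}_\infty/\!\sim$, contradicting $<\,\in N_{V_1}^k$.

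The main step requiring verification is that the embedding $V_1\hookrightarrow\mathbf{B}$ really is an $L^*$-embedding; this reduces to the rigidity of the natural ordering on a one-dimensional space once its minimum nonzero element is fixed, which is immediate from the antilexicographic definition.
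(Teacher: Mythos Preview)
Your proposal is correct and follows essentially the same approach as the paper: both arguments show the Extension Property fails by taking a $(k+2)$-dimensional naturally ordered space with $V_1$ sitting as the span of the smallest basis vector, and then counting $\sim$-classes above $[V_1]$ to obstruct any extension. Your write-up is somewhat more careful in explicitly checking that the age of $\langle\mathbf{V}_\infty,<\rangle$ is all of $\mathcal{K}^*$ and that the one-dimensional inclusion is an $L^*$-embedding, but the core idea is identical to the paper's proof.
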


\begin{proof}
We will show that the extension property does not hold for $\langle \mathbf{V}_\infty, <\rangle$. To see this, let $\langle V, \prec\!|_V\rangle \subset \langle U, \prec\rangle \in \mathcal{K}^*$, where $U$ is $k+2$-dimensional, $\prec$ on $U$ is given by an ordered basis $u_0 \succ...\succ u_{k+1}$, and $V = \langle u_{k+1}\rangle$. Now given $<\in X_{\mathcal{K}^*}$, let $\pi:\: \langle V, \prec\!|_V\rangle\rightarrow \langle \mathbf{V}_\infty, <\rangle$ with $\pi(V) = V_1$. There are at most $k$ equivalence classes greater than $[V_1, <]$ in $\mathbf{V}_\infty$, but $k+1$ equivalence classes greater than $[V, \prec] = [U,\prec]_{k+1}$ in $U$. It follows that there is no $\pi':\; \langle U, \prec\rangle \rightarrow \langle \mathbf{V}_\infty, <\rangle$ extending $\pi$. 
\end{proof}

Somewhat conversely, if there are at least $k+1$ equivalence classes above $[V_1, <]$ in $\langle \mathbf{V}_\infty, <\rangle$, then $\pi$ as in the proof of 6.2 does admit an extension $\pi'$. We can see this as follows: choose representatives $w_i$, $0\leq i \leq k$ from $k+1$ equivalence classes $[w_0]>\cdots >[w_k]$, where $[w_k]>[V_1, <]$. Let $W = \langle w_0,...,w_k, V_1\rangle\subset \mathbf{V}_\infty$. Then $W$ is a $k+2$ dimensional space; let $\{x_0>\cdots >x_{k+1}\}$ be the basis which gives the ordering $<\!\!|_W$. Then we see that $\langle x_{k+1} \rangle = V_1$, and an extension $\pi'$ is given by letting $\pi'(u_i) = x_i$.

Let $W \subset \mathbf{V}_\infty$ be a finite dimensional subspace with $V_1\subset W$, and let $<\,\in\!X_{\mathcal{K}^*}$. Define $(N_W)_{V_1}^k$ to be those $<\,\in \!X_{\mathcal{K}^*}$ for which $[V_1, <\!\!|_W] \geq [W,<\!\!|_W]_k$, i.e.\ those orderings for which there are at most $k$ equivalence classes of $W/\!\sim$ greater that $[V_1,<\!\!|_W]$; if $V_1\not\subset W$, set $(N_W)_{V_1}^k = X_{\mathcal{K}^*}$. Note that $(N_U)_{V_1}^k \subseteq (N_W)_{V_1}^k$ for $W\subseteq U$. Hence $$N_{V_1}^k = \!\!\!\bigcap_{W \in \mathrm{Fin}(\mathbf{V}_\infty)} \!\!\!(N_W)_{V_1}^k,$$ 
where $\mathrm{Fin}(\mathbf{K})$ denotes the set of finite substructures of $\mathbf{K}$. In particular, if $V_1 = W_1\subset W_2\subset\cdots$ where each $W_n$ is $n$-dimensional and $\mathbf{V}_\infty = \displaystyle\bigcup_{n=1}^\infty W_n$, we have $N_{V_1}^k = \displaystyle\bigcap_{n=1}^\infty (N_{W_n})_{V_1}^k$. Whether or not $< \,\in (N_W)_{V_1}^k$ depends only on $<\!|_W$; each $(N_W)_{V_1}^k$ is a union of basic open sets of the form $N_{\langle W, <_W\rangle} := \{<\,\in \!X_{\mathcal{K}^*}:\; <\!|_W = <_W\}$. 

Let us compute $\mu((N_W)_{V_1}^k)$. We have:
\vspace{1 mm}\\
$$\mu((N_W)_{V_1}^k) = \frac{\#(<_W\text{ on } W \text{ with } [V_1, <_W] \geq [W,<_W]_k)}{\#(<_W\text{ on } W)}.$$
\vspace{1 mm}\\
Fix $v\in V_1\backslash \{0\}$. For $(v_0,...,v_{m-1})\in F_q$ not all zero, let 
\vspace{-4 mm}\\
\begin{align*}
N_{(v_0,...,v_{m-1})} = \{&<_W:\text{ $<_W$ is given by ordered basis } b_0 >_W\cdots >_W b_{m-1}\\ &\text{and } v = v_0b_0+\cdots v_{m-1}b_{m-1}\}.
\end{align*}
\vspace{-4 mm}\\
First let us show that for any two nonzero $(u_i)_{i=0}^{m-1}$, $(v_i)_{i=o}^{m-1}$, we have $|N_{(u_0,...,u_{m-1})}| = |N_{(v_0,...,v_{m-1})}|$. Let $l$ be least with $v_l$ nonzero. Select $b_0,...,b_{l-1},b_{l+1},...,b_{m-1}$ arbitrarily such that they are linearly independent and don't span $V_1$. There are $(q^m-q)(q^m-q^2)\cdots(q^m-q^{m-1})$ ways to do this. Now set
\vspace{0 mm}\\ 
$$b_l = v_l^{-1}(v - v_0 b_0 -...-v_{l-1} b_{l-1} - v_{l+1} b_{l+1} -...-v_{m-1} b_{m-1}).$$
\vspace{0 mm}\\ 
This ordered basis gives some $<_W\, \in N_{(v_0,...,v_{m-1})}$, and certainly each $<_W$ can be uniquely produced in this manner. Hence $|N_{(v_0,...,v_{m-1})}| = (q^m-q)(q^m-q^2)\cdots(q^m-q^{m-1})$. Now we have
\begin{align*}
\mu((N_W)_{V_1}^k) &= \frac{\sum{\{|N_{(v_0,...,v_{m-1})}|:\text{$(v_i)_{i=0}^{m-1} \neq 0$ and there is $l\leq k$ with $v_l\neq 0$}\}}}{\sum{\{|N_{(v_0,...,v_{m-1})}|:\;(v_i)_{i=0}^{m-1} \neq 0\}}}\\\\
&= \frac{q^m - q^{m-k-1}}{q^m-1} = \frac{1-q^{-k-1}}{1-q^{-m}}.
\end{align*}

Now $\mu(N_{V_1}^k) = \displaystyle\lim_{m\to \infty} \frac{1-q^{-k-1}}{1-q^{-m}} = 1 - \frac{1}{q^{k+1}}$. Letting $k\to \infty$, we have $\mu(N_{V_1}^{fin}) = 1$. This proves Theorem 1.2. \qed \\

As there are countably many $1$-dimensional subspaces of $\mathbf{V}_\infty$, we obtain the following immediate corollary.

\begin{cor}
Let $N_{\omega^*} = \{<\,\in \!X_{\mathcal{K}^*}:\; <\!/\!\sim \,= \omega^*\}$. Then $\mu(N_{\omega^*}) = 1$.
\end{cor}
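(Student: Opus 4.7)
The plan is to reduce the corollary to the containment $\bigcap_{V_1} N_{V_1}^{fin} \subseteq N_{\omega^*}$, where the intersection ranges over all one-dimensional subspaces $V_1 \subset \mathbf{V}_\infty$. Granting this containment, the result is immediate: the preceding computation gives $\mu(N_{V_1}^{fin}) = 1$ for every $V_1$, and since $\mathbf{V}_\infty$ is countable it has only countably many one-dimensional subspaces, so countable additivity yields $\mu\bigl(\bigcap_{V_1} N_{V_1}^{fin}\bigr) = 1$.

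To establish the containment, I would fix $<\,\in \bigcap_{V_1} N_{V_1}^{fin}$ and work in the linear order $L = \mathbf{V}_\infty/\!\sim$, which is linearly ordered by the observation following Lemma 6.1. Every nonzero class of $L$ contains some nonzero $u$, hence contains the entire line $\langle u\rangle$; applying the hypothesis with $V_1 = \langle u\rangle$ then shows that $[u]$ has only finitely many classes above it in $L$. Moreover $L$ is infinite: for each $n$, the $n$-dimensional subspace $W_n\subset \mathbf{V}_\infty$ carries $n$ distinct $\sim\!|_{W_n}$-classes, and these inject into $L$ because $\sim\!|_{W_n}$ agrees with $\sim$ on $\mathbf{V}_\infty$ (noted just before Lemma 6.1).

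The last step is the order-theoretic lemma that any infinite linear order in which every element has only finitely many elements strictly above it is of type $\omega^*$. I would first produce a global maximum: any maximal element of the finite chain of elements $\geq x$ must be globally maximal, else a larger element above it would have been present in that chain. Next, every non-maximal element admits an immediate successor, namely the minimum of the finite nonempty set of elements strictly above it. Iterating downward from the maximum produces a descending chain $x_0 > x_1 > x_2 > \cdots$, which must exhaust $L$: any omitted $y$ would satisfy $y < x_n$ for all $n$ and thus have infinitely many elements above it, a contradiction. Hence $L \cong \omega^*$, i.e.\ $<\,\in N_{\omega^*}$.

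The main obstacle here is essentially nonexistent, since the heavy analytic work was already done in Proposition 6.2 and in the explicit calculation of $\mu((N_W)_{V_1}^k)$. The only remaining content is the elementary structural classification of $L$, and the small but important bookkeeping point that $L$ is genuinely infinite (supplied by the dimension count above), so that the order type is not merely a finite initial segment of $\omega^*$.
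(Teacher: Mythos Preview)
Your proof is correct and follows essentially the same approach as the paper: both reduce to the inclusion $\bigcap_{V_1} N_{V_1}^{fin} \subseteq N_{\omega^*}$ together with countable additivity, the paper arguing by contrapositive (if $<\!/\!\sim\,\neq\omega^*$ then some class has infinitely many classes above it) while you argue directly. Your version is more careful in explicitly verifying that $\mathbf{V}_\infty/\!\sim$ is infinite and in spelling out the order-theoretic characterization of $\omega^*$; the only minor slip is that ``iterating downward'' requires immediate \emph{predecessors} rather than the successors you constructed, but these exist by the symmetric argument.
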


\begin{proof}
We will show that $N_{\omega^*} = \!\!\!\displaystyle\bigcap_{v\in \mathbf{V}_\infty\backslash \{0\}} \!\!\!N_{\langle v\rangle}^{fin}$. Clearly $N_{\omega^*}\subset \!\!\! \displaystyle\bigcap_{v\in \mathbf{V}_\infty\backslash \{0\}} \!\!\!N_{\langle v\rangle}^{fin}$. To show the other inclusion, note that if $<\,\in \!X_{\mathcal{K}^*}$ with $<\!/\!\sim \,\neq \omega^*$, then there are $[u], [v_i]\in \mathbf{V}_\infty/\!\sim$, $i\in \mathbb{N}$, with $[v_i]\neq[v_j]$ if $i\neq j$ and $[u]<[v_i]$ for each $i$. Pick $u\in [u]$. Then $<\,\notin N_{\langle u\rangle}^{fin}$.
\end{proof}

\section{Matrices of Ordered Inclusion}

In this section we develop some of the tools we will need to prove Theorem 8.3 below. Define a \emph{chain} to be a sequence of subspaces $V_1 \subset V_{2} \subset ...$ with $V_n$ $n$-dimensional and $\bigcup_{m \geq 1}V_m = \mathbf{V}_{\infty}$. Given a chain and an ordering $<\,\in \!X_{\mathcal{K}^*}$, we will write $<_n \,=\, <|_{V_n}$. We will write $B_n = \{b_0^{(n)}>\cdots >b_{n-1}^{(n)}\}$ for the least basis of $<_n$ in $V_n$, i.e.\ the basis which induces the antilexicographic ordering. 

Let $(V_i)_{i\in \mathbb{N}}$ be a chain and $<\,\in \!X_{\mathcal{K}^*}$. For $m > n$, we may represent the inclusion map $i: \langle V_n, <_n\rangle \hookrightarrow \langle V_m, <_m \rangle$ via the change of basis matrix $M_{n, m}$. Writing $M_{n, m} = (m_{ij})$, $0\leq i < m$, $0\leq j < n$, we have
$$b_j^{(n)} = \sum_{i=0}^{n-1}m_{ij}b_i^{(m)}.$$
We will call $M_{n, m}$ as above the \emph{matrix of ordered inclusion}; we will use the shorthand $M_n = M_{n, n+1}$ when there is no confusion. We see that $M_{n, m} = M_{m-1}\dots M_n$. This leads us to ask the following:

\begin{que}
For which $m \times n$ matrices $M$ is there $<\,\in \!X_{\mathcal{K}^*}$ such that $M = M_{n, m}$?
\end{que}
First observe that if $<\,\in \!X_{\mathcal{K}^*}$, then the following must hold:
\begin{enumerate}
\item
For $i < j$, we have $b_j^{(n)} <_m b_i^{(n)}$.
\item
For any $b_i^{(n)}\in B_n$, we have $\forall u \in V_n \left(u <_m b_i^{(n)} \Rightarrow u \ll_m b_i^{(n)}\right) $.
\end{enumerate}
\begin{prop}
Let $B_n$, $B_m$ be any ordered bases of $V_n$, $V_m$ inducing orderings $<_n$, $<_m$. Let $M$ be the matrix of inclusion with respect to these bases. If (1) and (2) hold, then $<_m$ extends $<_n$. 
\end{prop}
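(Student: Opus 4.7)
The plan is to extract from conditions (1) and (2) a rigid structural constraint on the matrix $M = M_{n,m}=(m_{ij})$ and then read off $<_m|_{V_n}\,=\,<_n$ directly. For each column index $j$, write $\alpha_j$ for the smallest row index $i$ with $m_{ij}\neq 0$; equivalently, $\alpha_j$ is the leading $B_m$-index of $b_j^{(n)}$. Recall that $<_m$ is itself natural on $V_m$, so (by the characterization of $\sim$ in finite-dimensional naturally ordered spaces given just before Lemma 6.1) $u\sim_m v$ holds iff $u$ and $v$ share a leading $B_m$-index, and $u\ll_m v$ holds iff the leading $B_m$-index of $u$ is strictly greater than that of $v$.

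I will establish that (1) and (2) force (a) $m_{\alpha_i,i}=1$ for every $i$; (b) $\alpha_0<\alpha_1<\cdots<\alpha_{n-1}$; and (c) $m_{\alpha_j,i}=0$ whenever $j\neq i$. For (a): since $cb_i^{(n)}\sim_m b_i^{(n)}$ for every $c\neq 0$, (2) forbids $cb_i^{(n)}<_m b_i^{(n)}$, which (comparing in $B_m$ at row $\alpha_i$) requires $cm_{\alpha_i,i}\geq m_{\alpha_i,i}$ for all nonzero $c$; since $1$ is the least nonzero element of $F_q$, this forces $m_{\alpha_i,i}=1$. For (b): given $j>i$, (1) yields $b_j^{(n)}<_m b_i^{(n)}$, which (2) upgrades to $b_j^{(n)}\ll_m b_i^{(n)}$, i.e.\ $\alpha_j>\alpha_i$. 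For (c): the case $j<i$ is automatic from (b), since $\alpha_j<\alpha_i$ lies strictly above the leading row of column $i$. For $j>i$, apply (2) to the mixed element $u=b_i^{(n)}+cb_j^{(n)}\in V_n$ with $c\in F_q\setminus\{0\}$: by (a) and (b), $u$ has leading $B_m$-index $\alpha_i$ with value $1$, so $u\sim_m b_i^{(n)}$, while $u$ and $b_i^{(n)}$ agree in $B_m$ at every row below $\alpha_j$ and differ at row $\alpha_j$, where their values are $m_{\alpha_j,i}+c$ and $m_{\alpha_j,i}$. Condition (2) then demands $m_{\alpha_j,i}+c\geq m_{\alpha_j,i}$ in $F_q$ for every $c\neq 0$; taking $c=-m_{\alpha_j,i}$ would make the left side $0$, strictly less than $m_{\alpha_j,i}$ unless $m_{\alpha_j,i}=0$.

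With $m_{\alpha_k,j}=\delta_{kj}$ in hand, the conclusion drops out. For any $u=\sum_j u_j b_j^{(n)}\in V_n$, the $B_m$-coefficient of $u$ at row $\alpha_k$ equals exactly $u_k$. Now suppose $u,v\in V_n$ with $u<_n v$, and let $\ell$ be the least index where $u_\ell\neq v_\ell$, so $u_\ell<v_\ell$ in $F_q$. Every column $j\geq\ell$ of $M$ has nonzero entries only at rows $\geq\alpha_j\geq\alpha_\ell$, so $u$ and $v$ have identical $B_m$-coefficients at every row below $\alpha_\ell$; at row $\alpha_\ell$ their coefficients are $u_\ell$ and $v_\ell$, and the antilex comparison in $B_m$ gives $u<_m v$. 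I expect the hard part to be claim (c): the insight is that (2) must be probed not just on the basis vectors $b_j^{(n)}$ themselves but on the mixed sums $b_i^{(n)}+cb_j^{(n)}$, with the freedom to vary $c$ over $F_q\setminus\{0\}$ exactly matching the freedom to hit any possible nonzero value of the offending entry $m_{\alpha_j,i}$.
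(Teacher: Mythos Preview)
Your proof is correct, but it takes a rather different route from the paper's. The paper argues as follows: let $Z=\{z_0>\cdots>z_{n-1}\}$ be the least basis for the natural order $<_m|_{V_n}$, and set $X=\{v\in V_n:\forall u\in V_n\,(u<_m v\Rightarrow u\ll_m v)\}$. Then $Z\subseteq X$ (each $z_i$ is minimal in its $\sim_m$-class within $V_n$) and $B_n\subseteq X$ (this is exactly condition~(2)); but distinct elements of $X$ are $\sim_m$-inequivalent, so $|X|\le n$, forcing $Z=B_n$ as sets. Condition~(1) then pins down $b_i^{(n)}=z_i$, so $<_n\,=\,<_m|_{V_n}$.

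Your argument instead extracts the reduced-column-echelon structure of $M$ directly from (1) and (2) and then verifies the order extension coordinate-by-coordinate. This is longer but perfectly sound, and it is essentially the same computation the paper carries out in the paragraphs \emph{after} Proposition~7.2 when characterizing which matrices are ``valid''; you have effectively folded that later discussion into the proof of the proposition itself. The paper's approach buys brevity and a clean conceptual picture (both bases sit in a set that is too small to contain anything else); yours buys the explicit matrix description as an immediate byproduct.
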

\begin{proof}
Let $Z = \{z_0>...>z_{n-1}\}$ be the least basis of $\langle V_n, <_m\!|_{V_n}\rangle$. Consider the following subset of $V_n$:
$$X := \{v\in V_n:\; \forall u\in V_n\left(u <_m v \Rightarrow u \ll_m v\right)\}$$
We see that $Z\subseteq X$ and $B_n\subseteq X$. However, we also see that for $x_1\neq x_2 \in X$, $x_1 \not\sim x_2$. Hence $|X|\leq n$, and $Z = B_n$. It follows that $b_i^{(n)} = z_i$, $0\leq i < n$.
\end{proof}

Call an $m\times n$ matrix $M$ \emph{valid} if $M = M_{n, m}$ for some $<\,\in \!X_{\mathcal{K}^*}$ and some chain. Write $M = (m_{ij})$, $0\leq i < m$, $0\leq j < n$. For each column $j$ of $M$, let $m_j$ denote the least row number with $m_{m_jj}$ nonzero. If $M$ is valid, witnessed by $<\,\in \!X_{\mathcal{K}^*}$ and some chain, then the following must hold:
\begin{itemize}
\item
For $k < l$, we have $m_k < m_l$ since $b_l^{(n)} \ll_m b_k^{(n)}$. 
\item
For each $k$, we have $m_{m_kk} = 1$, as each $b\in B_n$ is minimal in its line.
\item
For $l\neq k$, we have $m_{m_kl} = 0$. For $l > k$, this is clear. For $l < k$, it is because we have $b_l^{(n)} <_m b_l^{(n)} + cb_k^{(n)}$ for all $c\in F_q\backslash \{0\}$.
\end{itemize}

The necessary conditions amount to saying that $M$ is the transpose of a matrix in reduced row echelon form with rank $n$. These conditions are also sufficient; let $M$ satisfy the above, and let $(V_i)_{i\in \mathbb{N}}$ be a chain. Fix an ordered basis $B_n$ of $V_n$, and choose an ordered basis $B_m$ such that $M$ is the matrix of inclusion with respect to these bases; we may do this as $M$ has rank $n$. We will show that the two conditions of Proposition 7.2 are satisfied. The first is clear. Now, for $b_i^{(n)}\in B_n$, suppose $u\in V_n$ with $u <_m b_i^{(n)}$. Write $u = u_0b_0^{(n)} +...+ u_{n-1}b_{n-1}^{(n)}$, and suppose $k$ is least with $u_k\neq 0$; we are done if we can show $k > i$. In the basis $B_m$, we have:
\begin{align*}
u &= \sum_{j=0}^{n-1}\left(u_j\left(\sum_{i=0}^{m-1}m_{ij}b_i^{(m)}\right)\right)\\\\
&= \mu_0b_0^{(m)} +...+ \mu_{m-1}b_{m-1}^{(m)}
\end{align*}
We see that $m_k$ is least with $\mu_{m_k} \neq 0$, so we must have $k\geq i$. Suppose $k=i$. Then $u_k=1$. Let 
\begin{align*}
x = u-b_i^{(n)} &= x_0b_0^{(n)}+...+x_{n-1}b_{n-1}^{(n)}\\
&= \chi_0b_0^{(m)}+...+\chi_{m-1}b_{m-1}^{(m)}
\end{align*}
and let $j$ be least with $x_j \neq 0$. We have that $j>i$. Then $m_j$ is least with $\chi_{n_j}\neq 0$. But $m_{m_ji}=0$, from which it follows that $u >_m b_i^{(n)}$, a contradiction. 

The case $m = n+1$ will be of special interest, so let us introduce some terminology specific to this case. For $M$ a valid matrix, the map $i \rightarrow m_i$ has range which excludes a single number $k$, $0\leq k\leq n$. Call such a matrix type $k$. Denote the type of matrix $M$ by $t(M)$. A natural question to ask is how many valid $(n+1)\times n$ matrices $M$ have $t(M)=k$. By the necessary conditions above, we see that $m_{ij}$ is determined except for those pairs $(i,j)$ with both $i = k$ and $j < k$; for these values of $i$ and $j$, any choice of $m_{ij}\in F_q$ gives us a valid $M$. Hence there are $q^k$ valid matrices $M$ of type $k$. Observe that $[V_n, <_n]_k\subset [V_{n+1},<_{n+1}]_k$ if $t(M_n) > k$, and $[V_n, <_n]_k\subset [V_{n+1}, <_{n+1}]_{k+1}$ if $t(M_n)\leq k$. In particular, $t(M_n) = k$ iff $[V_{n+1}, <_{n+1}]_k\cap V_n = \emptyset$. 

Let $\mathcal{M}_n$ be the set of valid $(n+1)\times n$ matrices, which we will equip with the uniform probability measure $\rho_n$. Let $\mathcal{M} = \prod_{n\in \mathbb{N}}{\mathcal{M}_n}$, and let $\rho$ be the product measure. Fix a chain $(V_i)_{i\in \mathbb{N}}$. There is a surjection $\pi:\: X_{\mathcal{K}^*}\rightarrow \mathcal{M}$ as follows; for any $<\,\in \!X_{\mathcal{K}^*}$, let $\pi(<) = (M_i)_{i\in \mathbb{N}}$, where $M_i$ is the matrix of inclusion for the chain $(V_i)_{i\in \mathbb{N}}$ and the ordering $<$.

\begin{prop}
Let $\mu$ be the unique measure on $X_{\mathcal{K}^*}$. Then $\rho = \pi_*\mu$.
\end{prop}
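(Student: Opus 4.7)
My plan is to verify $\pi_* \mu = \rho$ by showing the two measures agree on basic cylinder sets $C(N_1, \ldots, N_n) := \{(M_i)_{i \geq 1} \in \mathcal{M} : M_i = N_i \text{ for } 1 \leq i \leq n\}$, where $N_i \in \mathcal{M}_i$; such cylinders generate the $\sigma$-algebra of $\mathcal{M}$, so this suffices. Observe that $\pi^{-1}(C(N_1, \ldots, N_n))$ is a disjoint union of basic clopen sets $N_{\langle V_{n+1}, <_{n+1}\rangle}$ indexed by the orderings $<_{n+1}$ on $V_{n+1}$ whose sequence of inclusion matrices along the chain $V_1 \subset \cdots \subset V_{n+1}$ equals $(N_1, \ldots, N_n)$. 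By the uniformity of $\mu$, each such basic set has mass $1/|\mathrm{GL}_{n+1}(F_q)|$, so the entire question reduces to counting these orderings.

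The central combinatorial claim is that, for any ordering $<_n$ on $V_n$ and any valid matrix $N_n \in \mathcal{M}_n$ with $t(N_n) = k$, there are exactly $q^n(q-1)$ extensions $<_{n+1}$ on $V_{n+1}$ with $<_{n+1}|_{V_n} = <_n$ and inclusion matrix $N_n$. To establish this, I would use the reduced row echelon structure of valid matrices to rewrite the defining relations as
$$b_j^{(n)} = b_j^{(n+1)} + (N_n)_{k,j}\, b_k^{(n+1)} \text{ for } j < k, \qquad b_j^{(n)} = b_{j+1}^{(n+1)} \text{ for } j \geq k,$$
expressing every $b_i^{(n+1)}$ with $i \neq k$ as a fixed affine function of the single unknown vector $b_k^{(n+1)} \in V_{n+1}$. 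The resulting ordered set $B_{n+1}$ is a basis of $V_{n+1}$ precisely when $b_k^{(n+1)} \in V_{n+1} \setminus V_n$, giving $q^{n+1} - q^n = q^n(q-1)$ admissible choices. Because $1$ is the minimum of $F_q^*$, each $b_i^{(n+1)}$ is automatically minimal in its line under the induced antilexicographic ordering $<_{n+1}$, so $B_{n+1}$ is genuinely the least basis of $<_{n+1}$; Proposition 7.2, combined with the validity of $N_n$, then guarantees $<_{n+1}|_{V_n} = <_n$. Distinct choices of $b_k^{(n+1)}$ yield distinct ordered bases and hence distinct extensions.

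Iterating this count starting from the $q - 1$ natural orderings of $V_1$ yields that $\pi^{-1}(C(N_1, \ldots, N_n))$ contains exactly $(q-1)^{n+1} q^{n(n+1)/2}$ basic clopen sets, so
$$\mu\bigl(\pi^{-1}(C(N_1, \ldots, N_n))\bigr) = \frac{(q-1)^{n+1}\, q^{n(n+1)/2}}{|\mathrm{GL}_{n+1}(F_q)|} = \frac{(q-1)^{n+1}}{\prod_{j=1}^{n+1}(q^j-1)} = \prod_{i=1}^n \frac{q-1}{q^{i+1}-1} = \rho(C(N_1, \ldots, N_n)),$$
using $|\mathrm{GL}_{n+1}(F_q)| = q^{n(n+1)/2} \prod_{j=1}^{n+1}(q^j - 1)$. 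The main obstacle will be the combinatorial claim in the second paragraph; specifically, verifying that an arbitrary $b_k^{(n+1)} \in V_{n+1} \setminus V_n$ produces not merely a linearly independent set but the least basis of a genuinely valid element of $X_{\mathcal{K}^*}$ restricting to $<_n$. The remaining steps are routine algebraic simplifications using the order formula for $\mathrm{GL}_{n+1}(F_q)$.
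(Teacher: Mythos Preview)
Your proposal is correct and follows essentially the same approach as the paper: both arguments hinge on showing that for each natural ordering $<_n$ on $V_n$ and each valid $N_n\in\mathcal{M}_n$ there are exactly $q^{n+1}-q^n$ extensions $<_{n+1}$ with inclusion matrix $N_n$, and then deduce uniformity of $\pi_*\mu$ on cylinders. Your parameterization by the free vector $b_k^{(n+1)}\in V_{n+1}\setminus V_n$ (with $k=t(N_n)$) is a cleaner version of the paper's choice of $b_n^{(n+1)}$ after selecting $n$ independent rows, and your explicit arithmetic with $|\mathrm{GL}_{n+1}(F_q)|$ replaces the paper's terser independence computation, but the substance is the same.
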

\begin{proof}
First we will show the following: let $<_n$ be a natural order on $V_n$, and let $M\in \mathcal{M}_n$. First let us show that the number of natural $<_{n+1}$ on $V_{n+1}$ extending $<_n$ such that the matrix of ordered inclusion is $M$ does not depend on $M$ or $<_n$. Write $M = (m_{ij})$, $0\leq i < n+1$, $0\leq j < n$. Pick $n$ linearly independent rows of $M$; without loss of generality assume rows $0\leq i < n$ are independent. We now have the following procedure for producing $<_{n+1}$ extending $<_n$: first pick $b_n^{(n+1)}$ from among the $q^{n+1}-q^n$ vectors in $V_{n+1}\backslash V_n$. Now set the other $b_i^{(n+1)}$ to be the unique solution to the system of equations
\begin{align*}
\sum_{i=0}^{n-1}{m_{ij}b_i^{(n+1)}} = b_j^{(n)} - m_{nj}b_n^{(n+1)} \qquad (0\leq j < n).
\end{align*}
This procedure can produce any natural $<_{n+1}$ extending $<_n$ with matrix of ordered inclusion $M$. Moreover, we see that there are $q^{n+1}-q^n$ such extensions.

Let $U(S_1,...,S_k) = S_1\times...\times S_k \times \prod_{n>k}{\mathcal{M}_k}$ be a basic open set in $\mathcal{M}$. Note that whether or not an ordering $<$ is in $\pi^{-1}(U)$ depends only on $<\!|_{V_{k+1}}$. Now we have
\begin{align*}
\pi_*\mu((U)) &= \pi_*\mu(U(S_1)\cap U(\mathcal{M}_1,S_2)\cap...\cap U(\mathcal{M}_1,...,\mathcal{M}_{k-1},S_k))\\
&= \pi_*\mu(U(S_1))*\pi_*\mu(U(\mathcal{M}_1,S_2))*...*\pi_*\mu(U(\mathcal{M}_1,...,\mathcal{M}_{k-1},S_k))\\
&= (|S_1|/|\mathcal{M}_1|)*...*(|S_k|/|\mathcal{M}_k|)\\
&= \rho(U(S_1,...,S_k)).
\end{align*}
\end{proof}  

\section{A Representation of $\mu$}

To conclude this paper, we provide a more concrete representation of the measure $\mu$. First, we need a few general lemmas.
\begin{lemma}
Let $\langle V, <\rangle$ have the FLP, and fix $v\in V$. Suppose $u, w\in V\setminus \{0\}$ are minimal in their lines with $u\sim w$. Let $v_u\in F_q$ be such that $v-v_uu < v-du$ for $d\neq c_u$, and likewise for $w$. Then $v_u = v_w$.
\end{lemma}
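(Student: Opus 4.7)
The plan is to localize to the finite-dimensional subspace $U = \langle u, w, v\rangle \subseteq V$. By the FLP, $<\!|_U$ is a natural ordering, induced by some ordered basis $z_0 > z_1 > \cdots > z_l$ with $l \leq 2$; I would then express $u$, $w$, and $v$ in coordinates with respect to this basis. First I would observe that ``minimal in its line'' forces the leading nonzero coordinate to equal $1$: writing $u = u_m z_m + u_{m+1}z_{m+1} + \cdots$ with $u_m \neq 0$, the vector $cu$ has leading coefficient $cu_m$, which ranges over $F_q\setminus\{0\}$ as $c$ does, so by the antilex comparison the minimum is attained when this coefficient is the least nonzero element of $F_q$, namely $1$. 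Since $u \sim w$, the two vectors have the same leading index $k$, so
\[ u = z_k + u_{k+1}z_{k+1} + \cdots, \qquad w = z_k + w_{k+1}z_{k+1} + \cdots, \]
and I write $v = v_0 z_0 + \cdots + v_l z_l$.

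The main calculation is then coordinate-by-coordinate. The vector $v - du$ has position-$i$ coefficient $v_i$ for $i < k$ (independent of $d$) and position-$k$ coefficient $v_k - d$. Under the antilex ordering, minimizing $v - du$ over $d \in F_q$ reduces to minimizing the first $d$-dependent coordinate, namely position $k$: the coordinates at positions $< k$ agree for every $d$, and at position $k$ the values $\{v_k - d : d \in F_q\}$ exhaust $F_q$, so the unique minimum $0$ is achieved by $d = v_k$. This forces $v_u = v_k$. The identical calculation with $w$ in place of $u$ gives $v_w = v_k$, hence $v_u = v_w$.

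For $q = 2$ the definitions tweak as described in the paper, but the argument is structurally the same. Every nonzero vector is trivially minimal in its line, and the modified definition $u \sim w \iff u+w < \min(u,w)$ still forces $u$ and $w$ to share a leading index $k$ in the basis of $U$, since $u+w < \min(u,w)$ is equivalent to $u$ and $w$ both having their leading nonzero position at the same index $k$ (so that the $k$-th coordinate of $u+w$ cancels). The position-$k$ minimization then proceeds verbatim. The only real subtlety to track is bookkeeping around the antilex definition: the comparison is determined by the \emph{first} coordinate where two vectors differ, so a strict $F_q$-inequality at position $k$ cannot be overridden by later positions; this is what reduces the vector minimization over $d$ to the single-coordinate question $\min_d(v_k - d)$.
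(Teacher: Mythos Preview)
Your proof is correct and follows essentially the same approach as the paper: localize to $U=\langle u,v,w\rangle$, use the FLP to obtain coordinates from an ordered basis, observe that $u\sim w$ forces a common leading index $k$ and that minimality in the line forces the leading coefficient to be $1$, and then read off $v_u=v_w=v_k$. Your write-up is more explicit than the paper's (which simply asserts ``we see that $c_u=c_w=v_k$'' without spelling out the minimization), and your separate treatment of $q=2$ is a harmless elaboration, but the underlying argument is the same.
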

\begin{proof}
Form $U = \langle u, v, w\rangle$, and let $<\!|_U$ be induced by basis $\{x_0,...,x_l\}$, $l\leq 2$. Write $u = u_0x_0 +...+u_lx_l$, etc. Let $k$ be the least number with $u_k$ nonzero. Then $k$ is also the least number with $w_k$ nonzero. We have $u_k = w_k = 1$; we see that $c_u = c_w = v_k$.
\end{proof}
It now makes sense to define $v_{[u]}$ as in lemma 8.1 for $v\in V$ and $[u]\in V/\sim$.
\begin{lemma}
For any $u,v,w\in V\setminus \{0\}$ and $d\in F_q\setminus \{0\}$, we have $u_{[w]}+v_{[w]} = (u+v)_{[w]}$ and $(du)_{[w]} = d(u_{[w]})$.
\end{lemma}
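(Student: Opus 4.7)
The plan is to reduce the definition of $v_{[w]}$ to reading off a single coordinate in a suitable ordered basis, whereupon both additivity and $F_q$-linearity become obvious.

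First I would form the subspace $U := \langle u, v, w\rangle$ of $V$; by the FLP, $<|_U$ is a natural ordering, so it is induced by an ordered basis $x_0 > x_1 > \cdots > x_l$ with $l \leq 2$. Expanding each vector in this basis, let $k$ be the least index with $w_k \neq 0$. Then $x_k$ has coordinate vector $(0,\ldots,0,1,0,\ldots,0)$ with the $1$ in position $k$, so $x_k$ is minimal in its line, and $x_k \sim w$ (both have their leading nonzero coordinate at position $k$). Thus $x_k$ is a minimal-in-line representative of $[w]$ in $U$, and hence in $V$ as well.

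The key computation is that $u_{[w]} = u_k$, the $k$-th coordinate of $u$ in the basis $\{x_i\}$. Indeed, for any $c \in F_q$, the vector $u - c x_k$ agrees with $u$ in every coordinate except position $k$, where the coordinate becomes $u_k - c$. Since the ordering on $U$ is antilexicographic with respect to $x_0 > \cdots > x_l$, the coordinates at positions $0, \ldots, k-1$ are fixed as $c$ varies; comparison between different choices of $c$ is therefore determined by position $k$, and the strict minimum over $F_q$ is achieved uniquely at $c = u_k$. By Lemma 8.1 this value equals $u_{[w]}$. The same argument, applied to $v$ and to $u + v$ within the same subspace $U$, gives $v_{[w]} = v_k$ and $(u + v)_{[w]} = (u + v)_k$; and applied to $du$ within $\langle u, w\rangle$, gives $(du)_{[w]} = (du)_k$.

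The conclusion then follows from the linearity of the $k$-th coordinate functional: $(u+v)_k = u_k + v_k$ and $(du)_k = d u_k$. I don't anticipate a real obstacle; the only minor wrinkle is the degenerate case $u + v = 0$, where one should interpret $0_{[w]} = 0$ (the definition in Lemma 8.1 extends unproblematically to $v = 0$), and this matches $u_k + v_k = 0$ automatically. The substance of the proof is simply recognizing that the three quantities $u_{[w]}$, $v_{[w]}$, $(u+v)_{[w]}$ can be computed simultaneously as the same coordinate in a single ordered basis.
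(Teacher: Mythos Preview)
Your proposal is correct and follows essentially the same route as the paper: form $U=\langle u,v,w\rangle$, take the ordered basis inducing $<|_U$, let $k$ be the least index with $w_k\neq 0$, identify $v_{[w]}$ with the $k$-th coordinate, and read off linearity. Your write-up is somewhat more detailed (you justify explicitly why $x_k$ is a minimal-in-line representative of $[w]$ and flag the $u+v=0$ edge case), but the argument is the same.
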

\begin{proof}
Let $u,v\in V$. Form $U = \langle u, v, w\rangle$, and let $<\!|_U$ be induced by basis $\{x_0,...,x_l\}$, $l\leq 2$. Write $u = u_0x_0 +...+u_lx_l$, etc. Let $k$ be the least number with $w_k$ nonzero. Then $w_k = 1$; we see that $u_{[w]} = u_k$, $v_{[w]} = v_k$, $(u+v)_{[w]} = (u+v)_k = u_k + v_k$, and $(du)_{[w]} = du_k = d(u_{[w]})$. 
\end{proof}
We may now injectively map $\langle V, <\rangle$ to a subspace of $\langle F_{q}^{V/\sim}, \prec \rangle$, where $\prec$ is a partial ordering with $\alpha \prec \beta$ iff for some $[u]$ in $V/\!\sim$, we have $\alpha([u]) < \beta([u])$ and $\alpha([v]) \leq \beta([v])$ for all $[v] > [u]$. We will be most interested in the case $V/\!\sim \,= \omega^*$; in this case $\prec$ is a linear order, as $V/\!\sim$ has no infinite ascending chains. For a fixed $< \,\in \!N_{\omega^*}$, enumerate $\mathbf{V}_{\infty}/\sim$ by $\alpha_0 >\alpha_1 >\cdots$. Now for any $<\,\in \!N_{\omega^*}$ and $v\in \mathbf{V}_{\infty}$, we may identify $v\in F_{q}^{\omega^*}$. To make this identification explicit, pick $w_i\!\in\! \alpha_i$ minimal in their lines, and define $\phi_<:\: \langle \mathbf{V}_\infty, <\rangle \rightarrow \langle F_{q}^{\omega^*}, \prec\rangle$ via $\phi_<(v) = (v_i)_{i\in \omega^*}$, where $v_i = v_{[w_i]}$.

Fix any basis $B = \{b_0, b_1,...\}$ of $\mathbf{V}_\infty$. Define $\phi:\: N_{\omega^*} \rightarrow (F_q^{\omega^*})^{\omega^*}$ by setting $\phi(<) = (\phi_<(b_i))_{i\in \omega^*}$. Note that $\phi$ is a Borel map. Equip $(F_q^{\omega^*})^{\omega^*}$ with the product measure $\sigma$. 
\begin{theorem}
The map $\phi$ is injective and a.e.\ surjective. Moreover, $\sigma = \phi_*\mu$, giving a mod zero isomorphism of $(X_{\mathcal{K}^*}, \mu)$ and $((F_q^{\omega^*})^{\omega^*}, \sigma)$.
\end{theorem}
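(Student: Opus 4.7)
The plan is to establish three properties: injectivity of $\phi$ on $N_{\omega^*}$, the identity $\phi_*\mu = \sigma$, and $\sigma$-a.e.\ surjectivity. The last will follow from the second and Corollary 6.3, since pushforward measures are always supported on the image of the map; together with Borel measurability of $\phi$ (noted in the text), this yields the mod-zero isomorphism.

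For injectivity, I would exploit the linearity of $\phi_<$ given by Lemma 8.2. If $\phi(<) = \phi(<')$, then $\phi_<$ and $\phi_{<'}$ agree on the basis $\{b_i\}$, and applying Lemma 8.2 on each side separately shows they agree on all of $\mathbf{V}_\infty$. A short verification, carried out in a finite-dimensional witness $U = \langle u, v\rangle$ using its least basis (available by the FLP), gives $u < v \iff \phi_<(u) \prec \phi_<(v)$, and symmetrically for $<'$. Hence the common map $\phi_<$ recovers the ordering, forcing $<\,=\,<'$.

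The main content is computing $\phi_*\mu$. I would fix the chain $V_n = \langle b_0,\ldots,b_{n-1}\rangle$ with least basis $B_n = \{b_0^{(n)} > \cdots > b_{n-1}^{(n)}\}$, and record the transition as the invertible matrix $C_n \in \mathrm{GL}_n(F_q)$ defined by $b_i = \sum_j (C_n)_{ji}\, b_j^{(n)}$. Under the unique measure described at the start of Section 6, natural orderings on $V_n$ are equidistributed, equivalently ordered bases of $V_n$ are equidistributed, so $C_n$ is uniform in $\mathrm{GL}_n(F_q)$. The crucial bridge between the finite- and infinite-dimensional pictures is the following: for any $<\in N_{\omega^*}$, once $n$ is large enough that the top $j+1$ classes $\alpha_0 > \cdots > \alpha_j$ of $\mathbf{V}_\infty/\!\sim$ each contain a vector of $V_n$ (which happens eventually since each $\alpha_\ell$ is witnessed by its minimal-in-line representative $w_\ell \in \bigcup_n V_n$), the representative $w_j$ coincides with $b_j^{(n)}$, and therefore $\phi_<(b_i)_j = (C_n)_{ji}$ for all $i < n$.

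With this identification, $\phi_*\mu = \sigma$ reduces to the classical asymptotic fact that for $C$ uniform in $\mathrm{GL}_n(F_q)$ and any fixed finite $F \subset \omega \times \omega$ with $\max F < n$, the joint law of $\{C_{ji}\}_{(i,j)\in F}$ converges to $(\mathrm{Unif}(F_q))^F$ as $n \to \infty$; this holds because $|\mathrm{GL}_n(F_q)|/q^{n^2} \to \prod_{k\geq 1}(1 - q^{-k}) > 0$ and fixing a bounded number of entries perturbs this density by a factor tending to one. The main obstacle will be the measure-theoretic handoff: the threshold $n$ at which $(C_n)_{ji}$ stabilizes to $\phi_<(b_i)_j$ depends on $<$, so one must combine pointwise $\mu$-a.s.\ stabilization on $N_{\omega^*}$ (full measure by Corollary 6.3) with finite-$n$ distributional convergence, then invoke bounded convergence to match $\phi_*\mu$ with $\sigma$ on every finite cylinder. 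Surjectivity a.e.\ is then immediate, since $\sigma = \phi_*\mu$ forces the image $\phi(N_{\omega^*})$ to carry all of $\sigma$.
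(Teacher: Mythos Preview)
Your proposal is correct and takes a genuinely different route from the paper on both the measure computation and the surjectivity.

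For injectivity you and the paper do essentially the same thing (the paper compresses your argument into one sentence). One small imprecision: it is not true that ``the representative $w_j$ coincides with $b_j^{(n)}$'' for large $n$; the vectors $b_j^{(n)}$ need not stabilize (when $t(M_n)>j$ one has $b_j^{(n)}=b_j^{(n+1)}+c_jb_{t(M_n)}^{(n+1)}$ with $c_j$ arbitrary). What is true, and all you need, is that $b_j^{(n)}\in\alpha_j$ and is minimal in its line, so Lemma~8.1 lets you compute $\phi_<(b_i)_j$ using $b_j^{(n)}$ in place of $w_j$; the conclusion $\phi_<(b_i)_j=(C_n)_{ji}$ then follows as you say.

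For $\phi_*\mu=\sigma$, the paper proves Lemma~8.4 by a direct cylinder computation that leans on the Section~7 machinery (matrices of ordered inclusion, the events $A_1(l)$, $A_2(l)$, $B_i(k)$ and their independence). Your argument bypasses Section~7 entirely: you use only that $<\!|_{V_n}$ is uniform, identify the relevant coordinates with entries of a Haar-random $C_n\in\mathrm{GL}_n(F_q)$, and then invoke the asymptotic that a finite block of entries of such a matrix becomes i.i.d.\ uniform as $n\to\infty$. The bounded-convergence handoff you outline is exactly right: $X_n\to X$ pointwise on $N_{\omega^*}$ (full measure by Corollary~6.3), and $E[X_n]\to q^{-|F|}$ by the $\mathrm{GL}_n$ asymptotic, so $E[X]=q^{-|F|}$. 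The $\mathrm{GL}_n$ fact you cite is standard but does need a short argument (e.g.\ bound $P(\text{first $m$ columns dependent}\mid F)\le q^m\cdot q^{-(n-r)}$ by a union bound over nontrivial linear relations, then observe the remaining $n-m$ free columns contribute $\prod_{k=1}^{n-m}(1-q^{-k})$); you should spell this out rather than leave it as ``perturbs this density by a factor tending to one.''

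For a.e.\ surjectivity the paper gives an explicit full-$\sigma$-measure subset of the image (linearly independent $(\beta_i)$ with, for each $k$, some $\beta_i|_k=0\cdots 0c$, $c\neq 0$). Your deduction from $\phi_*\mu=\sigma$ is slicker and valid---$\phi(N_{\omega^*})$ is analytic, hence universally measurable, and any Borel set disjoint from it has $\sigma$-measure zero---but it gives no description of the image.

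In short: your approach trades the Section~7 calculations for a clean probabilistic limit about $\mathrm{GL}_n(F_q)$; the paper's approach is more self-contained and yields the extra structural information about $\mathrm{Im}(\phi)$.
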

\begin{proof}\renewcommand{\qedsymbol}{}
To see injectivity, it suffices to note that for any $\beta \in \mathrm{Im}(\phi)$ and any $<\,\in \phi^{-1}(\beta)$, the map $\phi_<$ is completely determined, which in turn determines $<$.

To show that $\phi$ is a.e.\ surjective, consider $\beta = (\beta_i)_{i\in \omega^*}\in (F_q^{\omega^*})^{\omega^*}$. Certainly $\beta\in \mathrm{Im}(\phi)$ if the following hold:
\begin{enumerate}
\item
The $\beta_i$ are linearly independent.
\item
For each $k>0$, there is an $i$ with $\beta_i|_k = 0\char94\cdots\char94 0\char94 c$, $c \neq 0$.
\end{enumerate}
The second condition is easily seen to be the countable intersection of measure $1$ conditions. For the first condition, observe that this is the countable intersection of conditions $c_0\beta_{i_0}+\cdots +c_k\beta_{i_k} \neq 0$, each of which is measure 1.
\end{proof}
To show that $\sigma = \phi_*\mu$, it suffices to prove the next lemma. For $v_0,...,v_{n-1}\in \mathbf{V}_{\infty}$ and $s_0,...s_{n-1}\in F_{q}^k$, define:
$$N(v_0, s_0,...,v_{n-1}, s_{n-1}) = \{<\,\in \!N_{\omega^*}:\; v_i = s_i\char94 \beta_i \text{ for some } \beta_i\in F_{q}^{\omega^*}\}$$
\begin{lemma}
For $v_0,...,v_{n-1}$ linearly independent, $\mu(N(v_0, s_0,...,v_{n-1}, s_{n-1})) = q^{-kn}$.
\end{lemma}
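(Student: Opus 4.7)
The plan is to localize the event to a finite-dimensional subspace and then compute via a limit. Fix a chain $V_1\subset V_2\subset\cdots$ with $V_n\supseteq \langle v_0,\ldots,v_{n-1}\rangle$ and $\bigcup_m V_m = \mathbf{V}_\infty$. By Corollary 6.3 I may work inside $N_{\omega^*}$, where the top $k$ equivalence classes $\alpha_0>\cdots>\alpha_{k-1}$ of $\mathbf{V}_\infty/\!\sim$ are well-defined. I introduce the ``capture'' events
\[
G_m := \{<\,\in N_{\omega^*}:\; V_m\cap \alpha_j \neq \emptyset \text{ for each } j<k\},
\]
which increase to $N_{\omega^*}$ since $V_m\uparrow \mathbf{V}_\infty$, so that $\mu(G_m)\uparrow 1$.

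The key observation is that on $G_m$ our event is determined by $<|_{V_m}$. Indeed, for $<\,\in G_m$ with least ordered basis $b_0^{(m)}>_m\cdots>_m b_{m-1}^{(m)}$ of $<|_{V_m}$, we have $b_j^{(m)}\in\alpha_j$ for $j<k$, so by Lemma 8.1 the quantity $(v_i)_{[w_j]}$ equals $(v_i)_{[b_j^{(m)}]}$, which is just the $j$-th coefficient of $v_i$ in the basis $b^{(m)}$. Hence on $G_m$ our event coincides with the finite-dimensional event
\[
E_m := \{<_m \text{ natural on } V_m :\; v_i \text{ has first } k \text{ coefficients } (s_i)_0,\ldots,(s_i)_{k-1} \text{ in the ordered basis of } <_m,\ \forall\, i<n\}.
\]
Since $\mu$ restricted to $V_m$ is the uniform measure on natural orderings, and $|\mu(\{<\,:\,<|_{V_m}\in E_m\}) - \mu(N\cap G_m)|\leq \mu(G_m^c)\to 0$, monotone convergence gives
\[
\mu\bigl(N(v_0,s_0,\ldots,v_{n-1},s_{n-1})\bigr) = \lim_{m\to\infty}|E_m|/|\mathrm{GL}(V_m)|.
\]

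To count $|E_m|$, I identify natural orderings on $V_m$ with elements of $\mathrm{GL}(V_m)$ via a fixed reference basis $e_0,\ldots,e_{m-1}$; under this identification, the $j$-th coefficient of $v_i$ in the new ordered basis becomes the $j$-th $e$-coordinate of $gv_i$. Thus $|E_m|$ counts $g\in\mathrm{GL}(V_m)$ with $P_k(gv_i)=s_i$ for all $i<n$, where $P_k$ is projection onto the first $k$ coordinates. Setting $F=P_k\circ g:V_m\to F_q^k$, every surjective $F$ arises from exactly $q^{k(m-k)}|\mathrm{GL}(V_{m-k})|$ elements $g$ (one freely chooses an isomorphism $\ker F\to\ker P_k$ and, on a complement of $\ker F$, any lift of $F$ to $V_m$). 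Hence $|E_m|/|\mathrm{GL}(V_m)| = N_F(s)/N_F$, where $N_F(s)$ counts surjections $F$ with $F(v_i)=s_i$ for all $i<n$ and $N_F=\prod_{j=0}^{k-1}(q^m-q^j)$ is the total number of surjections $V_m\to F_q^k$. Extending $v_0,\ldots,v_{n-1}$ to a basis of $V_m$ shows there are exactly $q^{k(m-n)}$ linear maps $F$ (surjective or not) satisfying $F(v_i)=s_i$, and an inclusion-exclusion over hyperplanes of $F_q^k$ bounds the fraction of non-surjective ones by $O(q^{-m})$. Thus $N_F(s)\sim q^{k(m-n)}$ and $N_F\sim q^{km}$ as $m\to\infty$, so the ratio tends to $q^{-kn}$, completing the proof.

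The main subtlety I anticipate is verifying carefully the identification on $G_m$ of $N(\cdot)$ with the $V_m$-level event $E_m$ and ensuring the finite-$m$ discrepancy is absorbed into $\mu(G_m^c)$; once that reduction is solid, the asymptotic counting of surjections $V_m\to F_q^k$ with prescribed values is routine.
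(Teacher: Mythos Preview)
Your argument is correct and arrives at the result by a route that is closely related to, but more streamlined than, the paper's. Both proofs localize to a finite $V_m$: the paper's stabilization event $A_2(l)=\{t(M_i)\geq k\text{ for all }i\geq l\}$ coincides on $N_{\omega^*}$ with your capture event $G_l$, and the paper's event $A_1(l)$ is exactly your $E_l$. The differences are threefold. First, the paper proves only the containment $A_1(l)\cap A_2(l)\subseteq N$ and recovers equality from the disjoint partition $\bigsqcup_{(s_i)} N(v_0,s_0,\ldots)$, whereas you sandwich $\mu(N)$ and $\mu(\{<|_{V_m}\in E_m\})$ symmetrically by $\mu(G_m^c)$. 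Second, the paper invokes the Section~7 machinery (types of matrices and Proposition~7.3) to factor $\mathbf{P}(A_2(l))$ as an independent product and compute it explicitly; you avoid this entirely, needing only the soft fact $G_m\uparrow N_{\omega^*}$. Third, and most notably, the paper splits into two cases according to whether the $s_i$ are linearly independent, bootstrapping the dependent case from the independent one by concatenating random tails; your surjection count $N_F(s)/N_F$ with inclusion--exclusion over hyperplanes handles all tuples $(s_i)$ uniformly. Your approach is therefore more self-contained; the paper's approach has the minor advantage of yielding an explicit finite-$l$ lower bound via the product formula for $\mathbf{P}(A_2(l))$.
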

\begin{proof}
Fix $<\,\in \!N_{\omega^*}$ and a chain $(V_i)_{i\in \mathbb{N}}$, and let $\pi$ be as in Proposition 5. First we consider the case where the $s_i$ are linearly independent. The probability that $v_i = s_i\char94 \beta_i$ for $0\leq i < n$ is bounded below by the probability that the following both occur for some $l$ with $v_0,...,v_{n-1}\in V_l$:
\begin{enumerate}
\item
Let $<_l$ on $V_l$ be given by basis $\{x_0,...,x_{l-1}\}$. Write $v_i = a_0^ix_0 +...+ a_{l-1}^ix_{l-1}$. Then $a_0^i\char94 ...\char94  a_{k-1}^i = s_i$.
\item
Now suppose $\pi(<) = (M_i)_{i\in \mathbb{N}} \in \mathcal{M}$. Then we have $t(M_i) \geq k$ for $i \geq l$.
\end{enumerate}
Call the first event $A_1(l)$ and the second event $A_2(l)$; these events are independent. For linearly independent $(a_0^i,...,a_{l-1}^i)$, $0\leq i < n$, the number of ordered bases $Y = \{y_0,...,y_{l-1}\}$ with $v_i = a_0^iy_0 +...+ a_{l-1}^iy_{l-1}$, $0\leq i < n$, does not depend on which particular linearly independent $(a_0^i,...,a_{l-1}^i)$ are being considered. Therefore:
\begin{align*}
\mathbf{P}(A_1(l)) &= \frac{\#(\text{linearly independent } (a_0^i,...,a_{l-1}^i)\text{ with } a_0^i\char94...\char94 a_{k-1}^i = s_i)}{\# (\text{linearly independent } (a_0^i,...,a_{l-1}^i))}\\
\\
&= \frac{q^{n(l-k)}}{(q^l-1)(q^l-q)...(q^l-q^{n-1})} 
\end{align*}
Now for $i \geq l$, let $B_i(k)$ be the event that $t(M_i) \geq k$. We have $\mathbf{P}(B_i(k)) = \frac{q^{i+1}-q^{k}}{q^{i+1}-1}$. The events $B_i(k)$ are mutually independent, hence:
\begin{align*}
\mathbf{P}(A_2(l)) = &\lim_{m \to \infty} \prod_{l\leq i < m}{\mathbf{P}(B_i(k))}\\
= &\lim_{m \to \infty} \frac{q^{(m-l)(k)}(q^{l-k+1}-1)...(q^l-1)}{(q^{m-k+1}-1)...(q^m-1)}\\
\geq &\:(1-q^{k-1-l})^{k}
\end{align*}
Now we have:
$$\lim_{l \to \infty} \mathbf{P}(A_1(l))\cdot\mathbf{P}(A_2(l)) = q^{-nk}$$
It follows that $\mu(N(v_0, s_0,...,v_{n-1}, s_{n-1})) \geq q^{-nk}$. Equality follows since for $(t_0,...,t_{n-1})\neq (s_0,...,s_{n-1})$, we have $N(v_0, s_0,...,v_{n-1}, s_{n-1}) \cap N(v_0,t_0,...,v_{n-1},t_{n-1}) = \emptyset$.   

When the $s_i$ are not linearly independent, let:
$$L^m = \{(t_0,...,t_{n-1}):\; t_i\in F_q^m \text{ and the } t_i \text{ are linearly independent}\}$$
We have the lower bound:
$$\mu(N(v_0, s_0,...,v_{n-1}, s_{n-1})) \geq \!\!\!\sum_{(t_0,...t_{n-1})\in L^m} \!\!\!{N(v_0, s_0\char94 t_0,..., v_{n-1}, s_{n-1}\char94 t_{n-1})}$$
We have $|L^m| = (q^m-1)(q^m-q)...(q^m-q^{n-1})$, giving us:
$$\mu(N(v_0, s_0,...,v_{n-1}, s_{n-1})) \geq q^{-k(n+m)}(q^m-1)...(q^m-q^{n-1})$$ for any $m$. Letting $m\to \infty$, we see that $\mu(N(v_0, s_0,...,v_{n-1}, s_{n-1})) \geq q^{-nk}$, and hence $\mu(N(v_0, s_0,...,v_{n-1}, s_{n-1})) = q^{-nk}$. 

\end{proof}

\section{Questions and Further Work}

Our investigations above lend themselves to a number of open questions:

\begin{que}
Are there other examples where Theorem 3.1 can be used to show non-amenability? Are there any examples where Theorem 3.1 can be used to show amenability?
\end{que}

\begin{que}
Assume the Fra\"iss\'e class $\mathcal{K}$ admits a companion $\mathcal{K}^*$. Let $\mathbf{K}$ be the Fra\"iss\'e limit of $\mathcal{K}$. If $\mathrm{Aut}(\mathbf{K})$ is uniquely ergodic, what are necessary and sufficient conditions for the unique measure on any minimal flow to be supported on the generic orbit?
\end{que} 

Pongr\'acz in [P] has given a partial answer to Question 8.2. Let $L^* = L\cup \{<\}$, for $<$ a symbol for a linear ordering and $L$ \textbf{relational}. Let $(\mathcal{K}, \mathcal{K}^*)$ be an excellent pair of Fra\"iss\'e classes in $L$ and $L^*$. Suppose $\mathcal{K}^*$ is order forgetful, i.e.\ for $\langle \mathbf{A}, <\rangle, \langle \mathbf{B}, <'\rangle \in \mathcal{K}^*$, we have $\mathbf{A} \cong \mathbf{B} \Leftrightarrow \langle \mathbf{A}, <\rangle \cong \langle \mathbf{B}, <'\rangle$. We see that $\mathrm{Aut}(\mathbf{K})$, if amenable, is uniquely ergodic, and the measure satisfies $\mu(N_{\langle \mathbf{A}, <\rangle}) = 1/k_{\mathbf{A}}$ (see the introduction). Pongr\'acz has shown that in this case, $\mu$ is supported generically. Note that this does not contradict Theorem 1.2; every hypothesis of Pongr\'acz's theorem is satisfied except that the language of vector spaces contains function symbols. His calculations also shed some light on the role that functions play in my calculations for $\mathbf{V}_\infty$, and they also suggest that we may be able to find relational examples with $L^* = L\cup \{S_1,...,S_n\}$ with the measure $\mu$ as above not supported generically. 

\begin{que} 
When the unique measure is not supported generically, where is it supported?
\end{que}

To make question 9.3 more precise, consider what was shown in sections 7 and 8. Though no single orbit has positive measure, it seems that by taking a suitable completion of $\mathbf{V}_\infty$, the unique measure concentrates on the isomorphism type of $F_q^{\omega^*}$ ordered lexicographically. In what sense can this be made precise and generalized to other structures?

\vspace{5 mm}

Andy Zucker

Carnegie Mellon University

Dept. of Mathematical Sciences

Pittsburgh, PA 15213

zucker.andy@gmail.com

\end{document}